\newtheorem{theorem}{Theorem}[section]
\newtheorem{corollary}{Corollary}[theorem]
\newtheorem{lemma}[theorem]{Lemma}
\def\cL{{\mathcal L}}
\def\cM{{\mathcal M}}
\def\cR{{\mathcal R}}
\def\cU{{\mathcal U}}
\begin{document}

\title{ Partial Gaussian sums and the P\'{o}lya--Vinogradov inequality for primitive characters}
\author{Matteo Bordignon}
\affil[]{School of Science, University of New South Wales Canberra }
\affil{m.bordignon@student.unsw.edu.au}
\date{\vspace{-5ex}}
\maketitle

\begin{abstract}
In this paper we obtain a new fully explicit constant for the P\'olya-Vinogradov inequality for primitive characters. Given a primitive character $\chi$ modulo $q$, we prove the following upper bound
\begin{align*}
\left| \sum_{1 \le n\le N} \chi(n) \right|\le c \sqrt{q} \log q,
\end{align*}
where $c=3/(4\pi^2)+o_q(1)$ for even characters and $c=3/(8\pi)+o_q(1)$ for odd characters, with explicit  $o_q(1)$ terms. This improves a result of Frolenkov and Soundararajan for large $q$. We proceed, following Hildebrand, obtaining the explicit version of a result by Montgomery--Vaughan on partial Gaussian sums and an explicit Burgess-like result on convoluted Dirichlet characters.
\end{abstract}
\section{Introduction}
It is of high interest studying the upper bound of the following quantity
\begin{equation*}
S(N, \chi):=\left|\sum_{n=1}^N \chi(n) \right|,
\end{equation*}
with $N\in \mathbb{N}$ and $\chi$ a non-principal Dirichlet character modulo $q$. The famous P\'{o}lya--Vinogradov inequality tells us that 
\begin{equation*}
S(N, \chi)\ll \sqrt{q}\log q,
\end{equation*}
and aside for the implied constant, this is the best known result. 
The focus is now on the implied constant, with a distinction between asymptotically explicit and completely explicit results. The best asymptotic constant can be found in the papers by Hildebrand \cite{Hild1} and Granville and Soundararajan \cite{G-S}. The explicit results have generally worst leading terms, the exception is for primitive characters of square-free moduli for which the author and Kerr \cite{B-K} proved a result that is comparable with the asymptotic one. There have been many completely explicit results, we will be focussing on primitive characters, as these results can be easily  extended to all non-principal characters. All the late results have the following shape
\begin{equation}
\label{eq:oldPV}
|S(N,\chi)|\le \begin{cases}\frac{1}{\pi^2}\sqrt{q}\log q+\delta_1 \sqrt{q}\log \log q+\delta_2\sqrt{q}~~\text{for}~~\chi(-1)=1, \\\frac{1}{2\pi}\sqrt{q}\log q+\delta_3 \sqrt{q}\log \log q+\delta_4\sqrt{q}~~\text{for}~~\chi(-1)=-1, \end{cases}
\end{equation}
with the second constants improving as follows:
\begin{itemize}
\item $\delta_1=\frac{2}{\pi^2}, \delta_2=\frac{3}{4}, \delta_3= \frac{1}{\pi}~ \text{and}~ \delta_4=1$ by Pomerance \cite{Pomerance},
\item Frolenkov \cite{Frolenkov} proves that for certain values of $\delta_2$ and $\delta_4$ it is possible to take $\delta_1=\delta_3= 0$,
\item Frolenkov and Soundararajan \cite{F-S} further improve the result showing that it is possible to take $\delta_2=\frac{1}{2}$, for $q\ge 1200$ and $\delta_4=1$, for $q\ge 40$.
\end{itemize}
The improvements above are on the constants of the remainder terms. Our aim is to improve on the leading constant using Hildebrand's approach \cite{Hildebrand}, that relies on two results: an upper bound on partial Gaussian sums due to Montgomery and Vaughan and the version of the Burgess bound for all non-principal characters from \cite{Burgess3}.\\
We start proving the following explicit version of Corollary 1 \cite{M-V} by Montgomery and Vaughan. Let $B \ge 1$ be a constant, and $F$ be the class of all multiplicative functions $f$ such that
\begin{equation}
\label{2}
| f(n)|\le B.
\end{equation}
With $f \in F$, $\alpha$ real and $e(\alpha)=\exp(2\pi i \alpha)$ write
\begin{equation*}
S(\alpha)=\sum^{N}_{n=1} f(n)e(n\alpha).
\end{equation*}
\begin{corollary}
\label{c1}
Suppose that $|\alpha - a/q| \le q^{-2}$, $(a,q)=1$, $E\ge 4$ with $k\ge 2$ and $\frac{e^3}{E} \le R\le q \le N/R$. Then
\begin{equation}
\label{SA}
S(\alpha) \le c_1(B,E,R) \frac{N}{\log N}+ c_2(B,E,R)  \frac{N\log^{\frac{3}{2}} ER}{\sqrt{R}},
\end{equation}
with the functions $c_1$ and $c_2$ defined in Theorem \ref{P-V}.
\end{corollary}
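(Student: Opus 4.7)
Since $c_1(B,E,R)$ and $c_2(B,E,R)$ are, by the statement of the corollary, exactly the constants that feature in Theorem~\ref{P-V}, the plan is to obtain Corollary~\ref{c1} as a direct specialisation of that theorem to the parameter regime $e^3/E\le R\le q\le N/R$. No fresh analytic input is needed; the corollary reduces to a verification of hypotheses and an absorption of subsidiary terms.

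The first step is to invoke Theorem~\ref{P-V} verbatim. That theorem produces, for any $\alpha$ with $|\alpha-a/q|\le q^{-2}$, $(a,q)=1$, and for any admissible choice of its auxiliary parameters, a bound of the form
\[
|S(\alpha)| \le c_1(B,E,R)\frac{N}{\log N} + c_2(B,E,R)\frac{N\log^{3/2}(ER)}{\sqrt{R}} + \mathcal{R}(N,R,q,E,k,B),
\]
where $\mathcal{R}$ collects the residual Vaughan--identity, Cauchy--Schwarz, and boundary pieces that appear in the proof of the theorem. The entire task of the corollary is to show that, under its additional hypotheses, $\mathcal{R}$ is either absent or already swallowed into the two displayed main terms.

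The second step is to check those hypotheses. The Diophantine condition on $\alpha$ and the quantitative assumptions $E\ge 4$ and $k\ge 2$ pass through unchanged. The inequality $e^3/E\le R$ yields $\log(ER)\ge 3$, which is precisely what keeps the explicit coefficient functions $c_1$ and $c_2$ well defined and ensures the $\log^{3/2}(ER)$ factor dominates lower-order logarithmic contributions. The chain $R\le q\le N/R$ supplies simultaneously the two usable inequalities $R\le q$ (which makes the Vaughan--identity range compatible with the denominator of the approximation to $\alpha$) and $R^2\le N$ (obtained by composing $R\le q$ with $q\le N/R$). Under these inequalities the typical residuals $N\sqrt{R/q}$ and $\sqrt{NR}$ are each bounded by $N/\sqrt{R}$, which is itself dominated by $N\log^{3/2}(ER)/\sqrt{R}$ since $\log(ER)\ge 3$.

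The step most liable to cause difficulty is the third, purely computational, one: confirming that the constants arising from these absorptions are already the ones packaged inside $c_1(B,E,R)$ and $c_2(B,E,R)$ as defined in Theorem~\ref{P-V}, rather than slightly different numerical expressions. Because the theorem's coefficient functions are constructed precisely so as to include every such contribution in the regime of the corollary, this is a matter of matching definitions rather than proving a new inequality, and constitutes the only place where careful numerical bookkeeping is really required. Once that bookkeeping is performed, the bound~\eqref{SA} follows immediately.
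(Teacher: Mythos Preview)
Your proposal rests on a misreading of the paper's logical structure. The phrase ``with the functions $c_1$ and $c_2$ defined in Theorem~\ref{P-V}'' is purely notational: Theorem~\ref{P-V} serves as a hub where all the explicit constants are collected, but it is a statement about the character sum $S(N,\chi)=\sum_{n\le N}\chi(n)$, not about the exponential sum $S(\alpha)=\sum_{n\le N}f(n)e(n\alpha)$. In the paper the dependency runs the other way: Theorem~\ref{t1} $\Rightarrow$ Corollary~\ref{c1} $\Rightarrow$ Lemma~\ref{LI1} $\Rightarrow$ Theorem~\ref{P-V}. Invoking Theorem~\ref{P-V} to prove Corollary~\ref{c1} would therefore be circular, and in any case Theorem~\ref{P-V} simply does not produce a bound of the shape you quote for $S(\alpha)$.

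The actual argument (Section~2.5) proceeds from Theorem~\ref{t1}, which only bounds $S(a/q)$ at the rational point. One writes the partial-summation identity
\[
S(\alpha)=e((\alpha-\beta)N)S(\beta,N)-2\pi i(\alpha-\beta)\int_1^N S(\beta,u)e((\alpha-\beta)u)\,du
\]
with $\beta=b/r$, applies Theorem~\ref{t1} to each $S(\beta,u)$ for $u>r$ (and the trivial bound for $u\le r$), and then splits into two cases. If $q>N^{1/2}$ one takes $b/r=a/q$ directly. If $q\le N^{1/2}$ one must appeal to Dirichlet's theorem to find a new approximation $b/r$ with $r\le 2N/q$ and $|\alpha-b/r|\le q/(2rN)$, and then argue that $r\ge q/2$ so that $|\alpha-b/r|\le N^{-1}$. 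Finally one inserts the Rosser--Schoenfeld lower bound for $\varphi(r)$. None of these ingredients---the integral identity, the case split on $q$ versus $N^{1/2}$, the second application of Dirichlet, or the $\varphi$-estimate---appears in your sketch, and the references to ``Vaughan-identity residuals'' and an unspecified remainder $\mathcal{R}$ do not substitute for them.
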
	
Note that condition \eqref{2} simplifies computations compared to $\sum_{n=1}^N| f(n)|\le B^2 N$ found in \cite{M-V}.\\
Proving an explicit version of the Burgess's bound in \cite{Burgess3} is difficult, but the following result, that is an explicit Burgess-like result on convoluted Dirichlet characters, is enough for our purposes.
\begin{theorem}
\label{TB}
Let $q$ and $k$ be integers and $h$ and $m$ real positive numbers. Assume that $q> (hk)^4$. Let $\chi$ be a primitive character mod $q$  and $\psi$ be any character mod $k$. For any integers $M$ and $N<q$ we have
\begin{align*}
\left|\sum_{M<n\le M+N}\psi(n)\chi(n)\right|\le m k d(q)^{3/2}N^{1/2}q^{3/16}(\log q \log \log q)^{\frac{1}{2}},
\end{align*}
with $d$ the divisor counting function and $q\ge q_0$, $h$ and $m$ in the following table.
\end{theorem}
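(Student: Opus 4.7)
Let $T:=\sum_{M<n\le M+N}\psi(n)\chi(n)$. The proof will follow Burgess's method with parameter $r=2$, adapted to the convoluted character $\psi\chi$. The central device is that Burgess-type shifts by multiples of $k$ preserve the factor $\psi(n)$, since $\psi$ has period $k$, which lets one exploit the primitivity of $\chi$ modulo $q$ alone and so obtain an estimate in powers of $q$ rather than $qk$. The hypothesis $q>(hk)^4$ is precisely what is required to ensure that the secondary error terms, which unavoidably carry a factor of $k$, can be absorbed into the main Burgess term.

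First, for each prime $p\in[P,2P]$ coprime to $qk$ and each $l\in[1,L]$, the identity
\begin{equation*}
T=\sum_{M<n\le M+N}\psi(n)\chi(n-plk)+O(Plk)
\end{equation*}
follows from $\psi(n-plk)=\psi(n)$. Averaging over $p$ and $l$ and using $\chi(n-plk)=\chi(p)\chi(\bar p n-lk)$ (valid for $\gcd(p,q)=1$), then bounding $|\psi(n)|\le 1$, produces an estimate of the shape
\begin{equation*}
|T|\ll\frac{1}{\pi^*(P)L}\sum_{p}\sum_{n}\Bigl|\sum_{l=1}^{L}\chi(\bar p n-lk)\Bigr|+O(PLk),
\end{equation*}
where $\pi^*(P)$ denotes the count of admissible primes. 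Applying H\"older's inequality with exponent $4$ to the double sum over $(p,n)$ and changing variable $m=\bar p n\bmod q$ then reduces the problem to estimating the fourth moment
\begin{equation*}
W:=\sum_{m\bmod q}\Bigl|\sum_{1\le l\le L}\chi(m-lk)\Bigr|^{4}.
\end{equation*}

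This fourth moment is the arithmetic heart of the argument. Expanding $|\,\cdot\,|^{4}$ and applying the Weil bound to the inner complete sums gives a bound of the shape $W\ll L^{2}q\,d(q)^{C_{1}}+L^{4}\sqrt{q}\,d(q)^{C_{2}}$, where the divisor factors arise because $q$ is not assumed prime or squarefree, so the ``degenerate" tuples $(l_{1},l_{2},l_{3},l_{4})$ that force the inner sum to be large can occur on divisor-of-$q$ subvarieties. Taking fourth roots of $W$ and optimizing $P$ and $L$ under the constraint $q>(hk)^{4}$ yields a main-term bound of the shape $d(q)^{3/2}N^{1/2}q^{3/16}(\log q\log\log q)^{1/2}$, in which the $(\log q\log\log q)^{1/2}$ factor comes from a Mertens-type estimate over the averaging primes $p$. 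The extra factor of $k$ in the stated inequality is inherited directly from the shift error $O(PLk)$, which after optimization is of the same order as the main term times $k$.

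The main obstacle will be the explicit fourth-moment bound $W$ above: establishing the exact exponents on $d(q)$ and controlling all constants for a general modulus $q$ requires an explicit Weil-type inequality for character sums of rational functions modulo $q$, together with a careful accounting of the degenerate contributions. Once this is in hand, the remainder of the argument is an optimization exercise, and the table of values for $q_{0}$, $h$ and $m$ records the precise ranges in which the optimization of $P$ and $L$ produces a bound of the stated form with clean explicit constants.
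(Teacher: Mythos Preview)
Your outline captures the Burgess skeleton correctly: shift by multiples of $k$ so that $\psi$ is inert, apply H\"older with exponent $4$, and bound the complete fourth moment via a Weil-type estimate (the paper does exactly this, obtaining $W\le 16qk^2V^2+4q^{1/2}k^4V^4d(q)^6$, whence the $d(q)^{3/2}$ after fourth roots). Two structural points, however, depart from the paper and one of them is a genuine gap.

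\textbf{The gap: the shift error.} You bound the boundary error from each shift trivially by $O(PLk)$ and assert that after optimization this matches the main term times $k$. It does not. In the paper's parametrisation the total shift length is $kuv\le N/g$ (your $PLk$ plays the same role), and for $N$ anywhere near $q$ the trivial bound $N/g$ is far larger than $N^{1/2}q^{3/16}$. The paper closes this by \emph{induction on $N$}: the error term $E(y)$ is a character sum of length at most $N/g<N$, so by the inductive hypothesis it is already bounded by $g^{-1/2}$ times the target, which is acceptable since $g\ge 2$. The induction is anchored at the bottom by the trivial bound for $N\le m^{2}q^{3/8}\log q\log\log q$, and at the top (for $N\gtrsim q^{5/8}\log q/\log\log q$) by P\'olya--Vinogradov. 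Without this inductive device (or something equivalent) the optimisation you describe cannot cover the full range $N<q$.

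\textbf{Differences that are not gaps.} First, you average over primes $p\in[P,2P]$ coprime to $qk$, whereas the paper averages over \emph{all} integers $u\le U$ with $(u,q)=1$; both variants are standard, but the integer version is what produces the factor $\log\log q$ here, via the lower bound $|\mathcal U|\ge(\varphi(q)/2q)U$ and $q/\varphi(q)\ll\log\log q$. The $\log q$ factor comes from the lattice-point count $I_q(N,U)\ll UN\log U$ (Lemma~\ref{lem:multcong} in the paper), not from a Mertens sum over primes. Second, your $O(PLk)$ heuristic for the provenance of the factor $k$ in the final bound is not how the paper obtains it: the $k$ appears because the inner averaging length is $V=\lfloor q^{1/4}/k\rfloor$, so that $1/(UV)\approx gk/N$, and this $k$ survives into the main term after H\"older.
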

\begin{center}
\captionof{table}{Small  and large $q$} \label{tab:m} 
\begin{tabular}{| c | c | c | c | c | }
  \hline
  $m$ & $\log_{10} q_0$ & $h$  \\[0.5ex] 
  \hline  \hline
   2.29 & 5& 10 \\
  \hline
   1.55 & 10 & 20 \\
  \hline
  1.29 & 15 & 30  \\
  \hline
  1.15& 20 & 40  \\
  \hline
\end{tabular}
\quad 
\begin{tabular}{| c | c | c | c | c | }
  \hline
  $m$ & $\log_{10} q_0$  & $h$  \\[0.5ex] 
  \hline  \hline
   0.73 &100 & 40 \\
  \hline
  0.61 & 200 & 40 \\
  \hline
  0.55 &300  &  200 \\
  \hline
  0.53& 400 & 300  \\
  \hline
\end{tabular}
\end{center}
If we restrict to $q$ prime, we should be able to improve the above result, but as we are mainly interested in a result for any $q$ we will not further exploit this possibility.
Related explicit results can be found in \cite{B-K}, \cite{Forrest} and \cite{Trev}.
Using the above result we are able to relax the conditions on $\alpha$ that appear in Corollary \ref{c1}, thus obtaining the following fundamental result.
\begin{lemma}
\label{LI1}
Take any $h$,~$m$ and lower bound for $q$ in Table \ref{tab:m}. Take $x$ such that $q^{\frac{3}{8}+\epsilon }\le x \le q$ and, fixed a real $\gamma\ge 2$, for any $q$ such that $ h(\log q)^{\gamma}\le  q^{\frac{1}{4}}$ and $E\ge 4$. We have, uniformly for all primitive characters $\chi$  modulo $q$ as above,
\begin{equation*}
\left| \sum_{n \le x} \chi(n) e(\alpha n) \right| \le c(1, q, \gamma, \epsilon, m) \frac{x}{\log q},
\end{equation*}
with the function $c$ defined in Theorem \ref{P-V}.
\end{lemma}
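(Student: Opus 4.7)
The plan is to follow Hildebrand's strategy and split on the quality of the rational approximation of $\alpha$. By Dirichlet's approximation theorem, for a threshold $Q$ of order $x/R$ (with $R$ to be chosen of size essentially $(\log q)^{\gamma}$) there exist coprime integers $a, q'$ with $1 \le q' \le Q$ and $|\alpha - a/q'| \le 1/(q' Q)$. Write $\alpha = a/q' + \beta$ and treat the cases $q' \ge R$ and $q' < R$ separately. The hypothesis $h(\log q)^{\gamma} \le q^{1/4}$, together with $x \le q$, ensures that such an $R$ lies safely within the ranges required by both cases and, moreover, that any modulus $k \le q' < R$ meets the constraint $q > (hk)^{4}$ of Theorem \ref{TB}.

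In the large-denominator case $R \le q' \le x/R$, all hypotheses of Corollary \ref{c1} hold verbatim (with $f = \chi$, $B = 1$, $N = x$, the same $E$, and the same $R$), so the corollary delivers a bound of the shape
\[
|S(\alpha)| \le c_1(1, E, R)\,\frac{x}{\log x} + c_2(1, E, R)\,\frac{x \log^{3/2}(ER)}{\sqrt{R}},
\]
whose two summands are both $O(x/\log q)$ for our choice of $R$.

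In the small-denominator case $q' < R$, Corollary \ref{c1} no longer applies and we appeal to Theorem \ref{TB}. Abel summation peels off the slowly varying factor $e(\beta n)$ (which, since $|\beta x| \le R/q' = O(R)$, contributes only a bounded multiplier depending on $R$) and reduces the task to bounding
\[
T(y) := \sum_{n \le y} \chi(n)\, e(an/q'), \qquad y \le x.
\]
Using the Gauss sum inversion
\[
e(an/q') \;=\; \frac{1}{\phi(q')} \sum_{\psi \bmod q'} \psi(a)\, \tau(\bar\psi)\, \psi(n) \qquad \text{for } (n, q') = 1,
\]
together with a M\"obius sieve to handle the $(n, q') > 1$ contribution (which recursively reduces to shorter sums $\sum_{m \le y/d} \chi(m)$ controlled by Theorem \ref{TB} with $k = 1$), the main part becomes a weighted sum of $\phi(q')$ twisted inner sums $\sum_{n \le y} \chi(n)\psi(n)$. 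After replacing each $\psi$ by its inducing primitive character and applying Theorem \ref{TB} with $k \le q' < R$, and using $|\tau(\bar\psi)| \le \sqrt{q'}$, one obtains a bound of the shape
\[
|T(y)| \le C\, m\, q'^{3/2}\, d(q)^{3/2}\, y^{1/2}\, q^{3/16}\, (\log q \log\log q)^{1/2},
\]
which, since $q' < R$ is of size $(\log q)^{\gamma}$ and $y \le x$ with $x \ge q^{3/8 + \epsilon}$, is of the required order $O(x/\log q)$.

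The main obstacle is the explicit bookkeeping. The conclusion is expressed through the single explicit function $c(1, q, \gamma, \epsilon, m)$ of Theorem \ref{P-V}, so every constant arising from the Gauss sum expansion, the imprimitive characters, the M\"obius sieve, and the Abel summation boundary terms must be tracked quantitatively. The threshold $R$ then has to be chosen so as to balance the two cases; the hypotheses $x \ge q^{3/8 + \epsilon}$ (needed for the Burgess-type saving of Theorem \ref{TB} to beat the trivial bound by a factor $\log q$) and $h(\log q)^{\gamma} \le q^{1/4}$ (needed to keep $R$ within the admissible range of Corollary \ref{c1}) are precisely what make this balancing possible.
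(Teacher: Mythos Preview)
Your overall strategy matches the paper exactly: set $R=(\log q)^{\gamma}$, apply Dirichlet's theorem to get $r/s$ with $s\le N/R$, use Corollary~\ref{c1} when $s\ge R$, and use partial summation together with Theorem~\ref{TB} when $s<R$.

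The only real difference is in how you unravel $T(y)=\sum_{n\le y}\chi(n)e(rn/s)$ in the small-denominator case, and here your description is slightly off. The paper does not use Gauss-sum inversion plus a separate M\"obius sieve; it splits directly by $d=(n,s)$, writes $s=dt$, and then applies character orthogonality mod $t$ to obtain
\[
T(u)=\sum_{dt=s}\frac{\chi(d)}{\phi(t)}\sum_{\psi\bmod t}\Big(\sum_{a\bmod t}e\!\left(\tfrac{ra}{t}\right)\overline{\psi}(a)\Big)\sum_{m\le u/d}\chi(m)\psi(m),
\]
so that Theorem~\ref{TB} with $k=t$ applies to every inner sum in one stroke. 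Your M\"obius correction does \emph{not} reduce to plain sums $\sum_{m\le y/d}\chi(m)$ with $k=1$: if $d\mid n$ with $d\mid q'$, then $e(an/q')=e(am/(q'/d))$, so the additive character survives with modulus $q'/d$ and you must reapply the same decomposition at that level. Iterating this recursion lands you back on the paper's formula anyway (summing over all $dt=q'$ and all $\psi\bmod t$), so the endpoint and the final bound you state are correct, but the intermediate description should be amended. The paper then bounds the resulting divisor sum crudely by $3(\log q)^{2\gamma}$ and inserts the upper bound for $d(q)^{3/2}$ from Lemma~\ref{lem:tauub} to get exactly the second branch of $c(E,q,\gamma,\epsilon,m)$.
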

This will give us the desired Theorem \ref{P-V}.
The problem is now reduced to a computational one, we need to minimize $\epsilon$, $q$ and $c(E, q, \gamma, \epsilon)$. We will thus obtain the following result, see Section \ref{OP} for more details.
\begin{theorem}
\label{EP-V}
With $\chi$ a primitive Dirichlet character modulo $q$ we have 
\begin{align*}
  \left|S(N, \chi)
   \right| \le 
  \begin{dcases}
     \frac{2}{\pi^2}(\frac{3}{8}+\epsilon)\sqrt{q}\log q +h_1(E, q, \gamma, \epsilon,m)\sqrt{q}~~\text{if}~~\chi(-1)=1,\\
    ~\\
   \frac{1}{\pi} (\frac{3}{8}+\epsilon)  \sqrt{q}\log q+ h_2(E, q, \gamma, \epsilon, m)\sqrt{q}~~\text{if}~~\chi(-1)=-1.
  \end{dcases}
\end{align*}
\end{theorem}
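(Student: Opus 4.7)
Proof Plan. The proof follows Hildebrand's approach by combining the classical P\'olya Fourier expansion of $S(N,\chi)$ with the uniform partial-sum bound of Lemma \ref{LI1}. For a primitive character $\chi$ mod $q$ and $N<q$, I start from the truncated Polya expansion
$$S(N,\chi) = \frac{\tau(\chi)}{2\pi i}\sum_{0 < |h| \le H}\frac{\bar\chi(h)(1-e(-Nh/q))}{h} + E_{\mathrm{tail}}(H),$$
with $E_{\mathrm{tail}}(H)\ll \sqrt{q}\log q/H$, and then pair $h \leftrightarrow -h$ via $\bar\chi(-h) = \chi(-1)\bar\chi(h)$. This collapses the Polya sum into a sine series for even $\chi$ and a $\sin^2$ series (from the identity $1-\cos = 2\sin^2$) for odd $\chi$, multiplied by $\tau(\chi)/\pi$. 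The classical Pomerance constants $1/\pi^2$ and $1/(2\pi)$ arise from combining the prefactor $\sqrt{q}/\pi$ with a Fourier-series treatment of the resulting trigonometric weights, in which the mean value $2/\pi$ of $|\sin|$ provides the extra factor of $2/\pi$ in the even case.

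The new saving is an Abel-summation bound on the inner character sum. Writing each trigonometric weight as a short combination of exponentials $e(\beta h)$ with $|\beta| = O(N/q)$, the task reduces to estimating, uniformly in $\beta$,
$$\Sigma_\beta = \sum_{h=1}^H \frac{\bar\chi(h)\,e(\beta h)}{h} = \frac{T_\beta(H)}{H} + \int_1^H \frac{T_\beta(u)}{u^2}\,du, \qquad T_\beta(u) := \sum_{h \le u}\bar\chi(h)\,e(\beta h).$$
I split the integral at the Burgess threshold $u_0 = q^{3/8+\epsilon}$: for $u \le u_0$ the trivial estimate $|T_\beta(u)| \le u$ contributes $\log u_0 = (3/8+\epsilon)\log q$, while for $u_0 \le u \le H = q$ Lemma \ref{LI1} gives $|T_\beta(u)| \le c(1,q,\gamma,\epsilon,m)\,u/\log q$, contributing only $O(1)$. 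The boundary term $T_\beta(H)/H$ is $O(1/\log q)$. Altogether this yields the fundamental estimate
$$|\Sigma_\beta| \le (3/8+\epsilon)\log q + O(1),$$
so Abel summation together with Lemma \ref{LI1} replaces the classical $\log q$ by its Burgess-scaled counterpart $(3/8+\epsilon)\log q$.

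To produce the leading coefficients of Theorem \ref{EP-V} I feed this $\Sigma_\beta$ estimate back into Pomerance's Fourier treatment of the trigonometric weights, taking care to use Lemma \ref{LI1} on each harmonic separately so that the mean-value saving $2/\pi$ of $|\sin|$ (for even $\chi$) and the corresponding saving from $1-\cos = 2\sin^2$ (for odd $\chi$) are preserved. The outcome is the leading terms $\tfrac{2}{\pi^2}(3/8+\epsilon)\sqrt{q}\log q$ and $\tfrac{1}{\pi}(3/8+\epsilon)\sqrt{q}\log q$. All residual contributions, namely the Polya tail at $H=q$, the Abel boundary term, the small-$u$ range $[1,u_0]$, the $O(1)$ from the range $[u_0,q]$, and the explicit constants coming from Lemma \ref{LI1} and Corollary \ref{c1}, are of size $O(\sqrt{q})$ times an explicit function of the parameters, and are packaged into $h_1$ and $h_2$. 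The main obstacle is precisely this last packaging step: applying a direct triangle inequality via $\sin(2\pi Nh/q) = (e(Nh/q)-e(-Nh/q))/(2i)$ would destroy the $2/\pi$ factor of Pomerance, so the Fourier expansion of $|\sin|$ must be woven into the argument and the Lemma \ref{LI1} estimate applied harmonic by harmonic. Once the constants are properly tracked, the remaining work is the numerical optimization over $\epsilon,E,\gamma$ and the admissible choices of $(m,h,q_0)$ from Table \ref{tab:m}, as the excerpt indicates.
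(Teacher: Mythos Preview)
Your overall strategy---P\'olya's Fourier expansion, pairing $a\leftrightarrow -a$, splitting at the Burgess threshold $q_1=q^{3/8+\epsilon}$, and invoking Lemma~\ref{LI1} on the long range---is exactly Hildebrand's method and is what the paper does. The odd case comes out correctly either way. The gap is in your mechanism for producing the constant $2/\pi^2$ in the even case.

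Your Abel-summation bound $|\Sigma_\beta|\le (3/8+\epsilon)\log q+O(1)$ is correct, but the main term $(3/8+\epsilon)\log q$ here arises from the trivial estimate $|T_\beta(u)|\le u$ on $u\le u_0$, and that estimate carries constant~$1$, not $2/\pi$. If you now write $\sin=(e(\cdot)-e(-\cdot))/2i$ and apply the triangle inequality you get, for even~$\chi$,
\[
\Bigl|\sum_{1\le a\le H}\overline\chi(a)\,\frac{\sin(2\pi aN/q)}{a}\Bigr|\le \tfrac12|\Sigma_{N/q}|+\tfrac12|\Sigma_{-N/q}|\le (3/8+\epsilon)\log q+O(1),
\]
which after the prefactor $\sqrt q/(2\pi)$ yields leading constant $1/\pi$, not $2/\pi^2$. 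Your proposed remedy---expand $|\sin|$ in Fourier series and ``apply Lemma~\ref{LI1} harmonic by harmonic''---does not fit, because the sum you must bound contains $\sin$, not $|\sin|$; the Fourier expansion of $|\sin|$ is what underlies Lemma~\ref{LI2}, and Lemma~\ref{LI1} plays no role there.

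The paper's route avoids this entirely by splitting the $a$-sum \emph{before} any Abel summation. Writing $\Sigma_1$ for $1\le a\le q_1$ and $\Sigma_2$ for $q_1<a<q/2$, one bounds $\Sigma_1$ by the crude triangle inequality $|\overline\chi(a)|\le 1$ and then applies Lemma~\ref{LI2} directly:
\[
|\Sigma_1|\le 2\sum_{a\le q_1}\frac{|\sin(2\pi aN/q)|}{a}\le \frac{4}{\pi}\log q_1+O(1)=\frac{4}{\pi}\bigl(\tfrac38+\epsilon\bigr)\log q+O(1),
\]
which is where the $2/\pi$ saving lives. Only $\Sigma_2$ is treated by partial summation together with Lemma~\ref{LI1}, and there the contribution is genuinely $O(1)$. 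Once you reorganise your argument this way it coincides with the paper's proof of Theorem~\ref{P-V}; Theorem~\ref{EP-V} then follows by the numerical optimisation in $E,\gamma,m$ over the admissible ranges of~$q$, exactly as you indicate at the end.
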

Upper bounds for $ h_{1/2}(E, q, \gamma, \epsilon)$ appear in the following tables. We first fix small $\epsilon$ and $q\ge q_0$ and will give $ h_{1/2}(E, q, \gamma, \epsilon)$ minimized in $\gamma$ and $E$.

\begin{center}
\bgroup
\def\arraystretch{1.25}%
\captionof{table}{Small $\epsilon$} \label{tab:1} 
\begin{tabular}{| c | c | c | c |}
  \hline   
  $\epsilon$ & $\log \log q_0$ & $h_1(E, q, \gamma, \epsilon, m)$ & $h_2(E, q, \gamma, \epsilon)$\\[0.5ex] 
  \hline  \hline
    $\frac{1}{10}$& 22  & 2727 & 5449\\
  \hline
   $\frac{1}{100}$& 209  & 3939 &7872\\
  \hline
  $\frac{1}{1000}$&2081  & 4092 &8180\\
  \hline
  $\frac{1}{10000}$&20800  & 4108 &8210\\
  \hline
\end{tabular}
\egroup
\end{center}

In the following table we will work with $\epsilon$ near $1/8$ to minimize $q$.

\begin{center}
\bgroup
\def\arraystretch{1.25}%
\captionof{table}{Large $\epsilon$} \label{tab:2} 
\begin{tabular}{| c | c | c | c |}
  \hline
  $\epsilon$ & $\log \log q_0$  & $h_1(E, q, \gamma, \epsilon, m)$ & $h_2(E, q, \gamma, \epsilon)$\\[0.5ex] 
  \hline \hline
  $\frac{1}{8}(1-\frac{1}{10})$&19.7   & 2594 &5183\\
  \hline
  $\frac{1}{8}(1-\frac{1}{100})$& 17.99 & 2480 &4955\\
  \hline  
  $\frac{1}{8}(1-\frac{1}{1000})$&17.89  & 2469 &4933\\
  \hline
  $\frac{1}{8}(1-\frac{1}{10000})$&17.83   & 2468 &4931\\
  \hline
\end{tabular}
\egroup
\end{center}
See Table \ref{tab:4} to see the ranges in which the above results are better than those in \cite{F-S}. 
We will also prove a version of the above two tables for characters with moduli with $d(q)$ fixed, an interesting case is certainly when $d(q)=2$ and the modulus is thus prime. See Table \ref{tab:4} in Section \ref{OP}.
Theorem \ref{EP-V} depends on the following result.
\begin{theorem}
\label{P-V}
Take any $h$,~$m$ and $q$ as in Theorem \ref{TB}. With $\chi$ a primitive Dirichlet character of modulo $q$, fixed $\gamma$, if $q$ is such that $h(\log q)^{\gamma}< q^{\frac{1}{4}}$, with $E\ge 4$ and $C$ the Euler--Mascheroni constant, we have the following result.
\begin{align*}
  \left|S(N, \chi)
   \right| \le 
  \begin{dcases}
     \frac{2}{\pi^2}\Big(\frac{3}{8}+\epsilon\Big)\sqrt{q}\log q +\left(\frac{2n(q,\epsilon)}{ \pi^2} +j\right)\sqrt{q}~\text{if}~\chi(-1)=1,\\
    ~\\
   \frac{1}{\pi} \Big(\frac{3}{8}+\epsilon\Big)  \sqrt{q}\log q+ \left(\frac{n(q,\epsilon)}{\pi}+j\right)\sqrt{q}~\text{if}~\chi(-1)=-1,
  \end{dcases}
\end{align*}
with
\begin{align*}
&j=j(E, q, \gamma, \epsilon,m)=\frac{c(\chi)}{\pi}\Big(\frac{1}{\log q}+\frac{5}{8}-\epsilon\Big)c(E, q, \gamma, \epsilon,m)+1+\frac{\left(e^{\pi}-1-\pi\right)}{2\pi} ,\\
  &c(\chi)=
  \begin{dcases}
     1~~\text{if}~~\chi(-1)=1,\\
    ~\\
   2~~\text{if}~~\chi(-1)=-1,
  \end{dcases}\\
&n(q,\epsilon)= C +\log 2 +\frac{3}{(\frac{1}{4}+\epsilon) q},
 \end{align*}
 \begin{align*}
&c(E, q, \gamma, \epsilon, m)=\max\Big\{(\frac{3}{8}+\epsilon)^{-1}c_1(1,E,\log^{\gamma} q)   + \frac{c_2(1,E, \log^{\gamma} q)  \log^{\frac{3}{2}} (E \log^{\gamma} q)}{ \log^{\frac{\gamma}{2}-1} q } ,\\
 &~~~~~~~~~~~~~~~3 m\log^{2\gamma+1} q \left(1 + 4 \pi \log^{\gamma} q\right)\frac{(\log q \log \log q)^{\frac{1}{2}}}{q^{\epsilon/2-\frac{3 \log 2 }{2\log \log q}\left(1+\frac{1}{\log \log q}+\frac{4.7626}{(\log \log q)^2} \right)}} \Big\},\\
&c_1(B,E,R)=(1+2\pi)b_1(B,E)+B2\pi\frac{2\log R}{R^2},\\
&c_2(B,E, R)= (1+2\pi)\Big( b_2(B,E)\frac{(e^C \log \log R+ \frac{2.51}{\log \log R})^{\frac{1}{2}}}{(\log ER)^{\frac{3}{2}}}+b_3(B,E,R,q)\Big),\\
&b_1(B,E)=B+9.82B^2+ 8.12(a_1+a_4) E+ 18.9a_3+3.46a_1+4.06a_4,\\
&b_2(B,E)=a_2\frac{\sqrt{2}E+1}{\log 2},\\
&b_3(B,E, R, q)=B^2 \frac{7.63}{\sqrt{E}}\frac{\log (4ER)}{\log (ER)^{\frac{3}{2}}}\left(1+\frac{\log(64/E)+1}{\log q} \right) +a_5 1.48+\\&~~~~~~~+1.48 \frac{a_6}{\sqrt{\log(ER)}}+ \frac{a_5}{\log 2}\frac{\log(2R)}{\log(ER))^{\frac{3}{2}}}+\frac{a_6}{\log 2}\frac{1}{(\log(ER))^{\frac{3}{2}}} ,\\\
&a_1= B^21.59, ~~a_2=B^2z\frac{\pi^2}{6}, ~~a_3=a_5=B^2z 0.96, \\
&a_4=B^2 z,~~a_6=B^2 1.32 z,~~ z=8\sqrt{2\prod _{p>2} \left( 1+\frac{1}{p^3-p^2-2p}\right)}.
\end{align*}

\end{theorem}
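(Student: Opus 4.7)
The plan is to follow Hildebrand's strategy, substituting our Lemma~\ref{LI1} for the classical Burgess input. The starting point is the Fourier expansion for primitive $\chi$ modulo $q$,
\[
S(N,\chi)=\frac{\tau(\chi)}{q}\sum_{a=1}^{q-1}\bar\chi(a)\,T_a,\qquad T_a=\sum_{m=1}^N e(-am/q),
\]
together with $|\tau(\chi)|=\sqrt q$. Pairing $a$ with $q-a$ folds the range to $1\le a\le (q-1)/2$: for even $\chi$ one obtains $2\sum\bar\chi(a)\operatorname{Re}T_a$, whose cosine kernel satisfies $\operatorname{Re}T_a=\sin(\pi a(2N+1)/q)/(2\sin(\pi a/q))-1/2$ with the constant piece killed by $\sum\bar\chi(a)=0$; for odd $\chi$ one gets the analogous sine kernel $\sin(\pi a(N+1)/q)\sin(\pi aN/q)/\sin(\pi a/q)$ without a constant piece. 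The asymmetry of these two kernels produces the parity factor $c(\chi)\in\{1,2\}$ appearing in $j$.

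The leading-constant improvement is obtained by truncating the Fourier expansion at $a\approx q^{3/8+\epsilon}$, exactly matching the regime of validity of Lemma~\ref{LI1}. On the kept range $a\le q^{3/8+\epsilon}$ one uses $|\bar\chi(a)|\le 1$ with the sharp explicit Mertens estimate
\[
\sum_{a\le q^{3/8+\epsilon}}\tfrac1a=(\tfrac38+\epsilon)\log q+C+O(q^{-3/8-\epsilon}),
\]
combined with the classical averaging of the oscillatory sine factor against $1/\sin(\pi a/q)$, to recover the main terms $\tfrac{2}{\pi^2}(\tfrac38+\epsilon)\sqrt q\log q$ (even) and $\tfrac{1}{\pi}(\tfrac38+\epsilon)\sqrt q\log q$ (odd), together with the $\sqrt q$-order Mertens constant $n(q,\epsilon)=C+\log 2+3/((\tfrac14+\epsilon)q)$; the $\log 2$ piece is the remainder of $\log(q/2)-(\tfrac38+\epsilon)\log q=(\tfrac58-\epsilon)\log q-\log 2$. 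For the discarded tail $a>q^{3/8+\epsilon}$ one swaps summation order and rewrites the inner sum over $n$ as an incomplete Gauss-sum-type object $\sum_{a\le q^{3/8+\epsilon}}\bar\chi(a)e(-am/q)$, which Lemma~\ref{LI1} (applied to $\bar\chi$ with frequency $\alpha=-m/q$) bounds by $c(E,q,\gamma,\epsilon,m)\,q^{3/8+\epsilon}/\log q$ uniformly in $m$. Integrating this estimate against $1/a$ over the tail range produces the $(\tfrac58-\epsilon)c(E,q,\gamma,\epsilon,m)$ contribution in $j$, the secondary $1/\log q$ term is a boundary correction, and the fixed additive $1+(e^\pi-1-\pi)/(2\pi)$ absorbs the explicit geometric-kernel errors near $a\to 0$ and $a\to q/2$.

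The main obstacle is the tail step: to check that the incomplete Gauss sum $\sum_{a\le q^{3/8+\epsilon}}\bar\chi(a)e(-am/q)$ falls within the regime of Lemma~\ref{LI1} uniformly in $m$, requiring the hypotheses $q^{3/8+\epsilon}\le x\le q$, $h(\log q)^\gamma\le q^{1/4}$, and $E\ge 4$ to hold throughout, and to keep every explicit constant under control through the order-swap and the subsequent integration. A secondary difficulty is ensuring that the parity-induced asymmetry in the cosine/sine kernels is tracked carefully enough that the explicit $1/(2\sin)$ versus $1/\sin$ distinction produces exactly the factor $c(\chi)\in\{1,2\}$. Once the tail transfer is in place, the closed-form expression for $j(E,q,\gamma,\epsilon,m)$ follows by routine but lengthy constant-tracking.
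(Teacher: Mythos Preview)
Your overall architecture matches the paper's (Hildebrand's Fourier expansion, split at $q_1=q^{3/8+\epsilon}$, trigonometric estimates on the head, Lemma~\ref{LI1} on the tail), but the tail step as you describe it is not right, and as written would not close.

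The paper does \emph{not} swap the order of summation for $\Sigma_2$. After summing the geometric series in $n$ one has
\[
\sum_{n=1}^N\chi(n)=\frac{1}{\tau(\bar\chi)}\sum_{0<|a|<q/2}\bar\chi(a)\,\frac{e(aN/q)-1}{1-e(-a/q)},
\]
and one first replaces $1/(1-e(-a/q))$ by $q/(2\pi i a)$ uniformly for $0<|a|<q/2$; the explicit error in this approximation is exactly where the constant $(e^\pi-1-\pi)/(2\pi)$ comes from (not from ``edge effects near $a\to 0$ and $a\to q/2$''). One is then left with $\frac{\sqrt q}{2\pi}\bigl|\sum_{0<|a|<q/2}\bar\chi(a)(e(aN/q)-1)/a\bigr|$, which is split into $\Sigma_1$ ($|a|\le q_1$) and $\Sigma_2$ ($q_1<|a|<q/2$). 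For $\Sigma_2$ one applies \emph{partial (Abel) summation in the variable $a$}: the running partial sums
\[
A(x)=\sum_{a\le x}\bar\chi(a)\,e(aN/q)
\]
are precisely the partial Gauss sums to which Lemma~\ref{LI1} applies, with character $\bar\chi$, frequency $\alpha=N/q$, and $x$ ranging over $[q_1,q/2]\subset[q^{3/8+\epsilon},q]$. This gives $|A(x)|\le c(E,q,\gamma,\epsilon,m)\,x/\log q$, and Abel summation against the weight $1/a$ over $[q_1,q/2]$ then produces $c\cdot\bigl(1/\log q+\log(q/(2q_1))/\log q\bigr)$, which is the $(\tfrac{1}{\log q}+\tfrac58-\epsilon)c$ appearing in $j$. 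Your description instead swaps to an outer sum over $m\in[1,N]$ and an inner sum over $a$ (with the wrong range $a\le q^{3/8+\epsilon}$ stated); summing the Lemma~\ref{LI1} bound over all $N$ values of $m$ is too lossy by a factor of order $N$, and the subsequent ``integrate against $1/a$'' is inconsistent with having already swapped.

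Two smaller corrections. First, the $\log 2$ in $n(q,\epsilon)$ is not a remainder from $\log(q/2)-(\tfrac38+\epsilon)\log q$; it comes directly from Pomerance's explicit bounds (Lemma~\ref{LI2}),
\[
\sum_{n\le x}\frac{1-\cos(\alpha n)}{n}\le \log x+C+\log 2+\frac{3}{x},\qquad
\sum_{n\le x}\frac{|\sin(\alpha n)|}{n}\le \frac{2}{\pi}\log x+\frac{2}{\pi}\Bigl(C+\log 2+\frac{3}{x}\Bigr),
\]
applied with $x=q_1$ to bound $|\Sigma_1|$ after the parity reduction. Second, the parity factor $c(\chi)$ arises not from a $1/(2\sin)$ versus $1/\sin$ distinction in a Dirichlet kernel, but from whether the folded sum over $a$ picks up $\sin(2\pi aN/q)$ (even case) or $1-\cos(2\pi aN/q)$ (odd case) in the numerator of $(e(aN/q)-1)/a$, together with the differing leading constants $2/\pi$ versus $1$ in Lemma~\ref{LI2}.
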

We will refer to the above defined functions through the paper.
The outline of this article is as follows. In Section \ref{SM-V} we prove Corollary \ref{c1} and in Section \ref{B} we prove Theorem \ref{TB}. We proceed using these two results in Section \ref{PVS} to prove Lemma \ref{LI1} and Theorem \ref{P-V}. We conclude proving Theorem \ref{EP-V} in Section \ref{OP}.
\section{Explicit Montgomery--Vaughan result}
\label{SM-V}
We aim to prove the following explicit result following \citep{M-V}.
\begin{theorem}
\label{t1}
Suppose that $4\le  q \le N$, $E\ge 4$ and $(a,q)=1$. Then
\begin{equation*}
 S(a/q) \le b_1(B,E)\frac{N}{\log N}+b_2(B,E)\frac{N}{\phi(q)^{\frac{1}{2}}}
+b_3(B,E, N/q,q)\sqrt{Nq}\log^{\frac{3}{2}} EN/q,
 \end{equation*}
uniformly for $f\in F$.
\end{theorem}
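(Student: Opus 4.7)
The plan is to follow the argument of \cite{M-V} step by step while making every implicit constant explicit, exploiting the pointwise bound $|f(n)|\le B$ to avoid the extra logarithmic factor that their $\ell^1$ hypothesis produces. Writing $f = f_0 * 1$ with $f_0 = f * \mu$, we get
\begin{equation*}
S(a/q) = \sum_{d \le N} f_0(d) \sum_{m \le N/d} e(dm a/q),
\end{equation*}
and since $f$ is multiplicative and bounded by $B$, so is $f_0$, with $|f_0(p)| \le B+1$ and $|f_0(p^k)| \le 2B$ for $k \ge 2$. The inner geometric sum depends only on $(d,q)$: writing $d = gd'$, $q = gq'$ with $g=(d,q)$ and $(d',q') = 1$, it is bounded by $\min(N/d,\,1/(2\|d'a/q'\|))$.

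I would then split the divisor variable $d$ at a parameter $U$ of size roughly $N/q$. The small-$d$ (type~I) range is controlled by moving to absolute values and invoking explicit Halberstam--Richert / Shiu-type bounds for $\sum_{d\le U}|f_0(d)|$ and for $\sum_{d\le U}|f_0(d)|\min(N/d,\,q/|\cdot|)$; this produces the $b_1\, N/\log N$ piece, and it is where the constants $a_1=1.59 B^2$ (an explicit divisor-function average) and $a_4=B^2 z$ (second-moment of $f_0$ via an Euler-product / Rankin argument, whence the prime product $z = 8\sqrt{2\prod_{p>2}(1+1/(p^3-p^2-2p))}$) enter. The large-$d$ (type~II) range is handled by Cauchy--Schwarz followed by an explicit Selberg / Montgomery--Vaughan Hilbert-type inequality applied to the dual sum $\sum_h|\sum_n f_0(n)e(hn/q)|^2$; this is what contributes the $b_2\, N/\phi(q)^{1/2}$ piece (with $b_2 = a_2(\sqrt{2}E+1)/\log 2$ and $a_2 = B^2 z\,\pi^2/6$, the $\pi^2/6$ coming from the Dirichlet series of $|f_0|^2$ at $s=1$) and the $b_3\sqrt{Nq}\log^{3/2}(EN/q)$ piece.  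The parameter $E\ge 4$ plays the role of a smoothing/cut-off level that balances the logarithmic factors between the two ranges.

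The main obstacle I expect is the bookkeeping in the mean-value step: one must quantify
\begin{equation*}
\sum_{\substack{h\bmod q\\ h\neq 0}} \Bigl|\sum_{n\le N} f_0(n)\,e(hn/q)\Bigr|^2
\end{equation*}
with fully numerical constants, which forces explicit control of $\sum_{n\le N}|f_0(n)|^2$ via the Euler product giving $z$, as well as explicit forms of the underlying Hilbert inequality. Once both type~I and type~II ranges are combined, the split parameter is absorbed into the dependence on $R=N/q$ and $E$, the residual error in the $b_3$ term is packaged with the $a_5,a_6$ constants (which, like $a_3,a_4$, carry the factor $B^2 z$ from the same second-moment input), and the stated three-term inequality follows. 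The hardest single calculation will be the explicit control of the prime-sum error in that mean value, since every constant in $b_1,b_2,b_3$ ultimately funnels through it.
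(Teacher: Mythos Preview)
Your proposal takes a route that is genuinely different from the paper's, and it has a structural gap that I do not see how to close.

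The paper does \emph{not} use the Dirichlet decomposition $f=f_0*1$. Following Montgomery--Vaughan, it first applies the identity $\log n=\sum_{m\mid n}\Lambda(m)$ to produce a genuinely bilinear sum
\[
\sum_{pn\le N} f(p)f(n)(\log p)\,e(pn a/q)
\]
(with an explicit error $(B+2.56B^2)N/\log N$; this is Lemma~\ref{lem:bilinear}). The region $\{pn\le N\}$ is then tiled by dyadic rectangles $\mathbf R_i$ and refined rectangles $\mathbf R_{ijk}$, with an explicit treatment of the residual set $\mathbf E$. On each rectangle one applies Cauchy--Schwarz in the $n$-variable with a Fej\'er-type weight, and the resulting diagonal/off-diagonal count over prime pairs $p,p'$ with $p'-p=h$ is handled by Siebert's explicit upper-bound sieve (Theorem~\ref{thm:siebert}). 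The constant $z=8\sqrt{2\prod_{p>2}(1+1/(p^3-p^2-2p))}$ and hence all of $a_2,\dots,a_6$ arise from that sieve step and the subsequent manipulation of $\prod_{p\mid h,\,p>2}\tfrac{p-1}{p-2}$, not from any Euler product or second moment of $f_0$.

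Your decomposition $S(a/q)=\sum_{d\le N} f_0(d)\sum_{m\le N/d}e(dma/q)$ loses exactly the feature the paper exploits. The function $f_0=f*\mu$ is multiplicative, but it is \emph{not} pointwise bounded: one only has $|f_0(p)|\le B+1$, $|f_0(p^k)|\le 2B$, so for squarefree $n$ one merely gets $|f_0(n)|\le (B+1)^{\omega(n)}$. Consequently neither $\sum_{d\le U}|f_0(d)|$ nor $\sum_{n\le N}|f_0(n)|^2$ is $O(N)$ in general (Shiu/Rankin give extra powers of $\log N$), so your type~I step and your large-sieve/Hilbert mean-value step both fail to produce the claimed $N/\log N$ and $N/\phi(q)^{1/2}$ savings. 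There is also no natural second ``long'' variable in $f_0*1$ to run a type~II argument on; the inner sum over $m$ is a geometric progression, not an arithmetic one carrying $f$. In short, the bilinear structure that drives the whole proof is created by the $\Lambda$-identity and is absent from your set-up; the constants you attribute to second moments of $f_0$ in fact come from the sieve for prime pairs.
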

We will deduce Corollary \ref{c1} from Theorem \ref{t1}.\\
An essential theorem to make the Montgomery--Vaughan result explicit is the Brun--Titchmarsh inequality \citep[Theorem 3.7]{MV1}.
\begin{theorem}
\label{B-T}
Let $a$ and $q$ be coprime integers, and let $x$ and $y$ be real numbers with $1\le q<y \le x$. Then we have
\begin{equation*}
\pi (x+y; q, a)- \pi (x; q, a) \le \frac{2y}{\varphi(q) \log \frac{y}{q}},
\end{equation*}
for all $q \le x$.
\end{theorem}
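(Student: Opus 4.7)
The plan is to apply Selberg's upper-bound sieve to the arithmetic progression $a\bmod q$ inside the window $(x,x+y]$. Set
\[
\cA=\{n\in(x,x+y]\colon n\equiv a\pmod{q}\},
\]
so that $|\cA|=y/q+\theta$ with $|\theta|\le 1$, and for every squarefree $d$ coprime to $q$ the number of multiples of $d$ in $\cA$ equals $y/(dq)+r_d$ with $|r_d|\le 1$. Every prime $p\in(x,x+y]$ satisfying $p\equiv a\pmod{q}$ and $p>z$ is coprime to $P(z):=\prod_{p'\le z,\,p'\nmid q}p'$, hence
\[
\pi(x+y;q,a)-\pi(x;q,a)\le S(\cA,P(z))+\pi(z).
\]

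I would then apply Selberg's $\Lambda^2$-sieve with sieve level $D$. With density function $\omega(p)=1$ for $p\nmid q$ and $\omega(p)=0$ for $p\mid q$, the sieve produces
\[
S(\cA,P(z))\le \frac{y/q}{G(D,q)}+\sum_{\substack{d<D^2\\(d,q)=1\\ \mu^2(d)=1}}3^{\omega(d)}|r_d|,\qquad G(D,q)=\sum_{\substack{\ell<D,\,(\ell,q)=1\\ \mu^2(\ell)=1}}\prod_{p\mid\ell}\frac{1}{p-1}.
\]
The crucial step is then a sharp lower bound for $G(D,q)$. Expanding $1/(p-1)=\sum_{k\ge 1}p^{-k}$ unfolds the Euler factors into $G(D,q)=\sum_{\mathrm{rad}(n)<D,\,(n,q)=1}1/n$; discarding $n\ge D$ in the sum gives
\[
G(D,q)\ge \sum_{\substack{n<D\\(n,q)=1}}\frac{1}{n}\ge \frac{\varphi(q)}{q}\log D+O(1).
\]

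Choosing $z=D$ with $D^2=y/q$ then makes $\log D=\tfrac12\log(y/q)$ and bounds the remainder by $\ll\sqrt{y/q}\,\log^{O(1)}y$, yielding
\[
\pi(x+y;q,a)-\pi(x;q,a)\le \frac{2y}{\varphi(q)\log(y/q)}+O\!\Bigl(\sqrt{y/q}\,\log^{O(1)}y\Bigr).
\]
The main obstacle is promoting this asymptotic inequality to the \emph{clean}, parameter--free bound stated in the theorem, uniformly in the full range $1\le q<y\le x$ rather than only for $y/q$ large. This refinement is due to Montgomery and Vaughan: one tunes the sieve weights so that the remainder contribution is absorbed into the main term without any loss in the leading constant $2$ or in the argument $y/q$ of the logarithm. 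I would therefore appeal directly to \cite[Theorem 3.7]{MV1} rather than reconstruct that delicate optimisation from scratch.
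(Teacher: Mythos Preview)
Your proposal is correct and lands in the same place as the paper: the paper does not prove Theorem~\ref{B-T} at all but simply quotes it as \cite[Theorem~3.7]{MV1}, and after your Selberg-sieve motivation you do exactly the same. One small inaccuracy worth noting: the Montgomery--Vaughan argument that removes the error term and secures the clean constant $2$ does not proceed by ``tuning the sieve weights'' in Selberg's $\Lambda^2$-sieve; rather it is a direct consequence of their sharpened large-sieve inequality (with the extra $N$ added to the denominator), which is a genuinely different mechanism from the combinatorial sieve you sketch. This does not affect the validity of your citation, only the narrative around it.
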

We introduce a precise enough result on primes from \cite{Rosser}.
\begin{theorem}
\label{R1}
For $x>1$ we have
\begin{equation*}
\pi(x)< 1.25506 \frac{x}{\log x}.
\end{equation*}
\end{theorem}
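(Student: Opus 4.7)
The plan is to split the verification at a cutoff $X_0$ into a computational range and an analytic range, noting that the value $1.25506$ is calibrated to the maximum of $\pi(x)\log x/x$, which is attained at a specific small prime.

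First I would handle the range $1<x\le X_0$ by direct numerical enumeration. Since $\pi(x)\log x/x$ is strictly decreasing on each interval $[p_n,p_{n+1})$, it suffices to tabulate $\pi(p)\log p/p$ at primes $p\le X_0$. A short search shows the supremum on this range is attained at $x=113$, where $30\log(113)/113\approx 1.25505...$; this is precisely what pins down the constant $1.25506$ in the statement and explains why it cannot be lowered without restricting $x$.

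Second, for $x>X_0$ I would pass through an explicit Chebyshev-type bound on $\theta(x)=\sum_{p\le x}\log p$ of the form $\theta(x)\le (1+\eta)x$ for all $x\ge x_1$, with $\eta$ small. Such bounds are available either from elementary binomial/Erd\H{o}s--Kalm\'ar arguments, or, for better constants, from the explicit formula for $\psi(x)$ together with an explicit zero-free region for $\zeta$. I would then convert $\theta$ to $\pi$ via Abel summation,
\begin{equation*}
\pi(x)=\frac{\theta(x)}{\log x}+\int_{2}^{x}\frac{\theta(t)}{t\log^{2} t}\,dt,
\end{equation*}
and estimate the integral using $\int_{2}^{x}\frac{dt}{\log^{2} t}=\frac{x}{\log^{2} x}+O\!\left(\frac{x}{\log^{3} x}\right)$. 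Combining, one obtains $\pi(x)<(1+\eta')\frac{x}{\log x}$ with $\eta'$ arbitrarily small provided $X_0$ (equivalently $x_1$) is chosen large enough. Since $1.25506-1>0$ is a fixed gap, this concludes the analytic regime.

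The main obstacle is the calibration of $X_0$: it must be large enough that the $\theta$-bound together with the integral remainder yields a final constant strictly below $1.25506$, yet small enough that the numerical verification of $\pi(p)\log p/p\le 1.25506$ for $p\le X_0$ is feasible. Purely elementary Chebyshev inputs force $X_0$ to be inconveniently large, so in practice one imports the sharper explicit bounds on $\theta(x)$ that follow from the Riemann--von Mangoldt explicit formula and a verified zero-free region; this is exactly the route of Rosser and Schoenfeld in \cite{Rosser}, which is the source cited here.
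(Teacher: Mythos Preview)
The paper does not supply a proof of this statement at all: it is simply quoted from Rosser and Schoenfeld \cite{Rosser} as an imported auxiliary bound, with no argument given in the text. Your sketch is a correct outline of precisely the Rosser--Schoenfeld method (numerical check up to a cutoff, identification of the extremum at $x=113$, and an explicit $\theta$-to-$\pi$ conversion via partial summation for large $x$), so there is nothing substantive to compare. One cosmetic remark: $\pi(x)\log x/x$ is only decreasing on $[p_n,p_{n+1})$ once $p_n\ge 3$, since $\log x/x$ increases on $(1,e)$; this does not affect the conclusion because the values for $x<3$ are far below $1.25506$.
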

We now introduce a result on the logarithm integral.
\begin{lemma}
\label{Li}
For $x>2$ we have
\begin{equation*}
\mathrm{Li}(x):=\int_2^x \frac{1}{\log t} dt\le 1.37\frac{x}{\log x}.
\end{equation*}
\end{lemma}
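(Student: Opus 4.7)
The plan is to reduce the inequality to checking a single value of $x$ by elementary calculus, and then verify that value numerically.

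First I would set $g(x) := 1.37\, x/\log x - \mathrm{Li}(x)$ for $x > 2$ and differentiate,
\[
g'(x) \;=\; 1.37 \cdot \frac{\log x - 1}{(\log x)^{2}} \;-\; \frac{1}{\log x} \;=\; \frac{0.37\log x - 1.37}{(\log x)^{2}}.
\]
Thus $g'$ has a unique zero at $x^{\ast} := e^{1.37/0.37}$, is negative on $(2,x^{\ast})$ and positive on $(x^{\ast},\infty)$, so $g$ attains its global infimum on $(2,\infty)$ at $x^{\ast}$. Since $g(2) = 1.37 \cdot 2/\log 2 > 0$ and $g(x) \to \infty$ as $x \to \infty$ (using $\mathrm{Li}(x) \sim x/\log x$ and $1.37 > 1$), the whole inequality reduces to showing $g(x^{\ast}) \ge 0$, i.e.\ $\mathrm{Li}(x^{\ast}) \le 0.37\, x^{\ast}$, after using $x^{\ast}/\log x^{\ast} = 0.37\, x^{\ast}/1.37$.

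Then I would verify this single numerical bound rigorously. Here $x^{\ast} \approx 40.57$, so the right-hand side is $0.37\, e^{1.37/0.37} \approx 15.01$. An explicit upper bound on $\mathrm{Li}(x^{\ast})$ can be obtained either by applying composite Simpson's rule to $\int_{2}^{x^{\ast}} dt/\log t$ with sufficiently many subintervals (the fourth derivative of $1/\log t$ is uniformly bounded on $[2,x^{\ast}]$, yielding an explicit error term) or by iterating integration by parts to obtain
\[
\mathrm{Li}(x) \;=\; \frac{x}{\log x} + \frac{x}{(\log x)^{2}} - \frac{2}{\log 2} - \frac{2}{(\log 2)^{2}} + 2\int_{2}^{x}\frac{dt}{(\log t)^{3}},
\]
and then bounding the residual integral piecewise by splitting $[2,x^{\ast}]$ at a small number of breakpoints and using monotonicity of $1/(\log t)^3$ on each piece.

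The main obstacle is that the constant $1.37$ is essentially sharp: numerically $\mathrm{Li}(x^{\ast}) \approx 14.8$ against the target $15.01$, leaving a margin of only about $0.2$. So the numerical envelope of $\mathrm{Li}(x^{\ast})$ must be controlled with enough precision to force a strict inequality. This is a purely computational difficulty rather than a structural one — taking sufficiently many Simpson subintervals, or sufficiently fine breakpoints in the integration-by-parts approach, closes the gap.
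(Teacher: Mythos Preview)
Your approach is correct and takes a genuinely different route from the paper. The paper quotes an external result of Bennett--Martin--O'Bryant--Rechnitzer giving $\mathrm{Li}(x)\le 1.2\,x/\log x$ for $x\ge 1865$, and then disposes of the finite range $2<x<1865$ by direct computation of the ratio $\mathrm{Li}(x)\log x/x$. Your argument is self-contained: one derivative computation localises the worst case to the single point $x^{\ast}=e^{1.37/0.37}$, and a single rigorous estimate of $\mathrm{Li}(x^{\ast})$ finishes the proof. The paper's route is quicker to write because the heavy lifting is delegated to a citation, but it still implicitly requires a numerical sweep over $(2,1865)$; yours trades that sweep for one careful evaluation and avoids any external dependence.

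One caveat: your stated margin is optimistic. A more precise evaluation (for instance via the convergent series $\mathrm{li}(x)=\gamma+\log\log x+\sum_{k\ge 1}(\log x)^{k}/(k\cdot k!)$ and $\mathrm{Li}(x)=\mathrm{li}(x)-\mathrm{li}(2)$) gives $\mathrm{Li}(x^{\ast})\approx 14.95$, not $14.8$, against the target $0.37\,x^{\ast}\approx 15.01$, so the actual slack is roughly $0.06$ rather than $0.2$. The inequality still holds, but whichever certified numerical scheme you choose must pin down $\mathrm{Li}(x^{\ast})$ to within a few hundredths; a handful of Simpson subintervals will not close the gap, though a moderate refinement will.
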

\begin{proof}
By Lemma 4.10 in \cite{Bennett} we have that
\begin{equation*}
\mathrm{Li}(x):=\int_2^x \frac{1}{\log t} dt\le 1.2\frac{x}{\log x}~~\text{for}~~x\ge 1865.
\end{equation*}
The result then follows computing $\mathrm{Li}(x)\frac{\log x}{x}$ for $2< x< 1865$.
\end{proof}
Note that for our applications the above results are sharp enough.\\
We now introduce a result by Siebert \citep{Siebert}.
\begin{theorem}
\label{thm:siebert}
Let $a \neq 0$, $b \neq 0$ be integers with $(a,b)=1$, $2\nmid a, b$. Then we have for $x > 1$
\begin{equation}
\label{sieb}
\sum_{p \le x,~ap+b \in P} 1 \le 16  \prod_{p>2} \left(1-\frac{1}{p^2}\right) \prod _{p|ab,~p>2}\frac{p-1}{p-2}\frac{x}{\log^2 x},
\end{equation}
where $P$ denotes the set of all prime numbers.
\end{theorem}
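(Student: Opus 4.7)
The plan is to apply an upper-bound combinatorial sieve of dimension two --- either Brun's pure sieve or Selberg's $\Lambda^2$-sieve --- to the sequence
\[
\mathcal{A} = \{\, n(an+b) : 1 \le n \le x \,\}.
\]
Writing $P(z) = \prod_{p \le z} p$, every prime $\sqrt{x} < p \le x$ with $ap+b$ also prime contributes to the sifting function $S(\mathcal{A}, z) = \#\{ n \le x : (n(an+b), P(z)) = 1 \}$, while the $O(\sqrt{x}/\log x)$ primes $p \le \sqrt{x}$ contribute a trivial error absorbable into the stated bound. It therefore suffices to bound $S(\mathcal{A}, z)$ from above, and for the two-dimensional Selberg sieve the natural level turns out to be $z = x^{1/4}$.

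The next step is to compute the local density $\omega(q)$, defined as the number of residues $n$ modulo $q$ for which $n(an+b) \equiv 0 \pmod q$. The coprimality $(a,b) = 1$ ensures that the two roots $n \equiv 0$ and $n \equiv -b/a$ are genuinely distinct whenever both make sense. For $q > 2$ with $q \nmid ab$ one obtains $\omega(q) = 2$; for $q > 2$ with $q \mid ab$ only a single residue class contributes, so $\omega(q) = 1$; the parity hypothesis on $ab$ fixes $\omega(2)$ and rules out the degenerate case in which the sifted set is essentially empty.

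Plugging these densities into the Selberg upper bound
\[
S(\mathcal{A}, z) \le \frac{x}{G(z)} + O(z^2 \log z),
\]
with $G(z) = \sum_{d \le z,\, d \mid P(z)} \prod_{p \mid d} \omega(p)/(p-\omega(p))$, the remainder is $o(x/\log^2 x)$ for $z = x^{1/4}$. Standard Mertens-type estimates then yield $G(z) \ge c (\log z)^2 / \mathfrak{S}(a,b)$, where $\mathfrak{S}(a,b)$ denotes the two-dimensional singular series. The elementary identity
\[
\frac{1 - 2/p}{(1 - 1/p)^2} = 1 - \frac{1}{(p-1)^2} = \frac{p(p-2)}{(p-1)^2}
\]
is then used to rewrite $\mathfrak{S}(a,b)$, and a final rearrangement of the local factors --- splitting off the $p \mid ab$ correction $\prod_{p \mid ab,\, p>2}(p-1)/(p-2)$ --- produces the Euler product $\prod_{p > 2}(1 - 1/p^2)$ appearing on the right-hand side of \eqref{sieb}.

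The main obstacle I anticipate is extracting the explicit constant $16$ rather than a generic $O(1)$. The factor $16$ arises from $(\log z)^{-2}$ at sieve level $z = x^{1/4}$, giving $(\log z)^2 = (\log x)^2/16$; but to make the remaining constants fully explicit one must invoke a version of Selberg's or Brun's sieve with all error terms tracked --- for instance, the explicit two-dimensional sieve bounds in Halberstam--Richert --- and then check that the transition from $\mathcal{A}$ restricted to primes back to $\mathcal{A}$ with unrestricted $n$, together with the Mertens-type remainders, can be absorbed into the stated bound without inflating the leading constant.
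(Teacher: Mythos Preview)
The paper does not prove this theorem at all: it is quoted verbatim as ``a result by Siebert~\cite{Siebert}'' and used as a black box in the proof of Lemma~\ref{lem:bilinearrectangles}. There is therefore no ``paper's own proof'' to compare against.

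Your sketch is the standard two-dimensional upper-bound sieve argument and is broadly the right idea. A few remarks on how it lines up with Siebert's actual proof. First, Siebert does not use the raw Selberg $\Lambda^2$-sieve but rather Montgomery's weighted sieve for dimension two; this is what permits the explicit constant~$16$ valid for all $x>1$, not merely for $x$ sufficiently large. With the unweighted Selberg sieve at level $z=x^{1/4}$ you would indeed pick up the factor $16$ from $(\log z)^{-2}$, but the Mertens remainders and the error term $O(z^2\log z)$ would force either a larger constant or a restriction to large~$x$. Second, your plan to sift $n(an+b)$ over all $n\le x$ and then restrict to primes afterwards is fine in principle, but the cleaner route (and the one Siebert takes) is to sift the single linear form $ap+b$ over primes $p\le x$ directly, treating the sequence as supported on primes from the outset; this avoids the extra $O(\sqrt{x}/\log x)$ bookkeeping you mention. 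So your outline would recover a bound of the shape~\eqref{sieb} with \emph{some} explicit constant, but reproducing exactly $16$ uniformly in $x>1$ requires the specific weighted-sieve machinery of Siebert's paper rather than the generic Halberstam--Richert bounds you allude to.
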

Note that an improvement on the leading constant in \eqref{sieb} would lead to a significant improvement in the final result.
We now introduce some elementary results. The following upper bounds are obtained by splitting the sum in two parts, estimating the first with computer aid and the second simply by integration.
\begin{lemma}
\label{dis}
\begin{equation*}
\sum_1^{\infty} \frac{1}{2^{\frac{n}{2}}}=\frac{\sqrt{2}}{\sqrt{2}-1},
~~~~~
 \sum_1^{\lfloor \log_2 N \rfloor} 2^{\frac{n}{2}}\le \frac{\sqrt{N}-1}{\sqrt{2}-1},
~~~~~
 \sum_1^{\infty} \frac{\sqrt{n}}{2^{\frac{n}{2}}}\le 4.15,
\end{equation*}
\begin{equation*}
 \sum_1^{\infty} \frac{\sqrt{n+1}}{2^{\frac{n}{2}}}\le  4.87,
~~~~~
\sum_{p \ge 2} \frac{ (\log p)^2}{p^2} \le 0.71,
~~~~~
\sum_{p \ge 2} \frac{ \log p}{(p-1)^2} \le 1.27,
\end{equation*}
\begin{equation*}
\sum_{p \ge 2} \frac{ \log p}{p(p-1)} \le 0.8,
~~~~~
\prod _{p>2} \left( 1+\frac{1}{p^3-p^2-2p}\right)\le e^{0.1},
\end{equation*}
\begin{equation*}
\sum_{p,j\ge 2}\frac{j\log p}{p^j}\le0.96,
~~~~~
\sum_{n \ge 1} \frac{ \log n}{n^2} \le 0.94 .
\end{equation*}
\end{lemma}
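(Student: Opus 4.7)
The plan is to handle the ten stated estimates in four natural groups, using the split strategy indicated in the preamble of the statement: closed form for a small-index head, and an elementary tail bound.

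First, the identities $\sum_{n=1}^{\infty}2^{-n/2}=\sqrt{2}/(\sqrt{2}-1)$ and $\sum_{n=1}^{\lfloor\log_2 N\rfloor}2^{n/2}\le (\sqrt{N}-1)/(\sqrt{2}-1)$ (after the appropriate reindexing) are pure geometric series. I would simply apply $\sum_{n\ge 1}r^n=r/(1-r)$ and its finite analogue with $r=2^{\pm 1/2}$, together with the bound $2^{\lfloor\log_2 N\rfloor/2}\le\sqrt{N}$; no estimation is required.

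Next, for $\sum_{n\ge 1}\sqrt{n}\,2^{-n/2}\le 4.15$ and $\sum_{n\ge 1}\sqrt{n+1}\,2^{-n/2}\le 4.87$, I would truncate at some moderate $N_{0}$, evaluate the head by direct computation, and dominate the tail by
\[
\sum_{n>N_{0}}\sqrt{n+1}\,2^{-n/2}\le\int_{N_{0}}^{\infty}\sqrt{t+1}\,2^{-t/2}\,dt,
\]
which for $N_0$ of modest size is already negligible (the integral is bounded by $C(N_0+1)^{1/2}2^{-N_0/2}/\log 2$ after integration by parts). The slack in $4.15$ and $4.87$ is generous enough that a crude integral bound suffices.

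For the three prime sums involving $(\log p)^2/p^2$, $\log p/(p-1)^2$ and $\log p/(p(p-1))$, the same split works: compute the head over primes $p\le X$ (say $X$ of order $10^2$--$10^3$) and bound the tail by the corresponding integer sum $\sum_{n>X}$, itself bounded by an explicit integral. For example, $\int_X^\infty(\log t)^2/t^2\,dt=((\log X)^2+2\log X+2)/X$, which is tiny for $X\ge 100$; the other two tails decay similarly.

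For the product $\prod_{p>2}(1+(p^3-p^2-2p)^{-1})\le e^{0.1}$, I would take logarithms and use $\log(1+x)\le x$, then factor $p^3-p^2-2p=p(p-2)(p+1)$. The resulting series $\sum_{p>2}1/(p(p-2)(p+1))$ decays like $1/p^3$, so head plus an integral tail comfortably satisfies the $0.1$ budget. The double sum $\sum_{p,j\ge 2}j\log p/p^j$ I would attack by performing the inner $j$-sum in closed form via $\sum_{j\ge 2}jx^j=(2x^2-x^3)/(1-x)^2$ at $x=1/p$, reducing to a single prime sum of order $\sum\log p/p^3$, which is handled as above. The final estimate $\sum_{n\ge 1}\log n/n^2\le 0.94$ starts effectively at $n=2$ and yields to the same head-plus-tail approach, the tail integral being $(\log X+1)/X$.

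The main obstacle is not conceptual but practical: one must choose each truncation point large enough that the stated numerical constants hold with some slack, which is a single pass of numerical computation. This is exactly why the lemma is prefaced with the remark that the heads are done ``with computer aid''.
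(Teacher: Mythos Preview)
Your proposal is correct and follows exactly the strategy the paper indicates in the sentence preceding the lemma: split each sum into a finite head computed numerically and a tail bounded by an elementary integral (the two geometric-series items being handled in closed form). One small slip: after closing the $j$-sum in $\sum_{p,j\ge 2}j(\log p)p^{-j}$ you obtain $\sum_{p}\frac{(2p-1)\log p}{p(p-1)^2}$, which decays like $\log p/p^{2}$ rather than $\log p/p^{3}$, but this is still handled by the same head-plus-integral argument and does not affect the conclusion.
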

\subsection{Reduction to bilinear forms}
Note in the following that in the applications we will take $B=1$.
\begin{lemma}
\label{lem:bilinear}
Let $f$ be a multiplicative function satisfying~\eqref{2} and $g$ be any real valued function. Then for any integer $N$ we have 
\begin{equation*}
\left|\sum_{1\le n \le N}f(n)e(g(n))\right|\le (B+2.56B^2) \frac{N}{\log{N}} 
\end{equation*}
\begin{equation*}
\label{lem:eq}
+\frac{1}{\log{N}}\left|\sum_{1\le np \le N} f(n)f(p)(\log{p}) e(g(np))\right|.
\end{equation*}
\end{lemma}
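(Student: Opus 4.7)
The plan is to create the bilinear structure by inserting the von Mangoldt decomposition $\log n=\sum_{d\mid n}\Lambda(d)$. First I would write
\[
(\log N)\, S \;=\; \sum_{n\le N} f(n)\,e(g(n))\log n \;+\; \sum_{n\le N} f(n)\,e(g(n))\log(N/n),
\]
and bound the second sum trivially by $B\sum_{n\le N}\log(N/n)\le BN$ (using $\log N!\ge N\log N-N$). After dividing by $\log N$ at the end, this produces the $B\cdot N/\log N$ contribution appearing in the lemma.

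For the first sum, unfolding $\log n$ and swapping the order of summation gives
\[
\sum_{n\le N} f(n)\,e(g(n))\log n \;=\; \sum_{k\ge 1}\sum_{p}(\log p)\sum_{m\le N/p^{k}} f(p^{k}m)\,e(g(p^{k}m)).
\]
I would isolate the $k=1$ term, which will eventually become the bilinear form, and bound the $k\ge 2$ tail by $BN\sum_{p,\,k\ge 2}\log p/p^{k}$ using only the crude inequality $|f(p^{k}m)|\le B$; the double sum over $(p,k)$ is controlled through the estimates collected in Lemma~\ref{dis}.

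For the $k=1$ piece I would split the inner sum according to whether $p\nmid m$ or $p\mid m$. On the first range multiplicativity gives $f(pm)=f(p)f(m)$, so extending back to all $m\le N/p$ recovers exactly the target bilinear form $\sum_{pm\le N}f(p)f(m)(\log p)\,e(g(pm))$ at the cost of an error $\le B^{2}N\sum_{p}\log p/p^{2}$ (both factors $f(p)$ and $f(m)$ now contribute a $B$). On the $p\mid m$ range I retain only $|f(pm)|\le B$, which produces an error $\le BN\sum_{p}\log p/p^{2}$. Collecting the three error pieces and dividing through by $\log N$ yields the claimed inequality, with the constant $2.56$ arising as the numerical value of the combination of $\sum_{p}\log p/p^{2}$ and $\sum_{p,k\ge 2}\log p/p^{k}$ supplied by Lemma~\ref{dis}.

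The main obstacle is bookkeeping rather than conceptual: one must carefully distinguish the places where only the general multiplicative bound $|f(n)|\le B$ is available (these feed into the $B$-coefficient of the final constant) from the places where $f$ genuinely factors as $f(p)f(m)$ (these feed into the $B^{2}$-coefficient), so that after summing and dividing by $\log N$ the split between $B$ and $B^{2}$ matches the statement.
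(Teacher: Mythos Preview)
Your approach is correct and is essentially the paper's own argument: insert the von Mangoldt identity, separate the prime-power contributions, and use multiplicativity to reach the bilinear form, with the same numerical inputs from Lemma~\ref{dis}. One small bookkeeping remark: the paper organises the errors slightly differently (it first bounds $T=\sum_{mn\le N}\Lambda(m)\,|f(mn)-f(m)f(n)|$ in one block and only afterwards discards the $m=p^{k}$, $k\ge 2$, contribution from $\sum f(m)f(n)\Lambda(m)$), which is why every error term there carries a $B^{2}$ and the constant comes out exactly as $B+2.56B^{2}$. Your ordering --- first throwing away $k\ge 2$ with only $|f(p^{k}m)|\le B$, then splitting the $k=1$ sum --- produces a constant of the shape $c_{1}B+c_{2}B^{2}$ with more weight on the linear term and less on the quadratic one; this does \emph{not} literally reproduce the $B$/$B^{2}$ split in the statement, but since the paper assumes $B\ge 1$ your bound is at least as strong, so the lemma still follows.
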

\begin{proof}
We first note that, from \eqref{2},
\begin{equation*}
\left|\sum_{1\le n \le N} f(n)\log (N/n) e(g(n))\right|  \le BN,
\end{equation*}
and hence 
\begin{align}
\label{eq:Spart1}
\left|\sum_{1\le n \le N}f(n)e(g(n))\right|\le \frac{BN}{\log{N}}+\frac{1}{\log{N}}\left|\sum_{1\le n \le N} f(n)(\log{n}) e(g(n))\right|.
\end{align}
Since $\log{n}=\sum_{m|n}\Lambda(m)$ we have 
\begin{align}
\label{eq:fnfnm}
\sum_{1\le n \le N} f(n)(\log{n}) e(g(n))=\sum_{1\le mn \le N} f(mn)\Lambda(m) e(g(mn)).
\end{align}
Our next step is to replace $f(mn)$ with $f(m)f(n)$ and thus we bound
\begin{equation}
\label{eq:Tdef}
T = \sum_{nm \le N} \Lambda(m) |f(mn)-f(m)f(n)| \le \Sigma_1 + \Sigma_2,
\end{equation}
where
\begin{equation*}
\Sigma_1= \sum_{p,k \ge 1} \sum_{\substack{n \le Np^{-k} \\ p|n}} (\log p) |f(p^kn)|.
\end{equation*}
and
\begin{equation*}
\Sigma_2= \sum_{p,k \ge 1}(\log p)|f(p^k)|\sum_{j \ge 1} |f(p^j)|\sum_{m \le Np^{-k-j}}|f(m)|.
\end{equation*}
Collecting together those terms in $\Sigma_1$ such that $p^k n$ is exactly divisible by $p^j$ and by partial summation, using \eqref{2} and Lemma \ref{dis}, we obtain
\begin{equation*}
\Sigma_1 \le \sum_{p,j \ge 2}(\log p)|f(p^j)|(j-1)\sum_{m \le Np^{-j}} |f(m)|\le B^2 N\sum_{p,j \ge 2} j p^{-j}\log p \le B^2 0.96 N.
\end{equation*} 
By \eqref{2},
\begin{equation*}
\Sigma_2 \le B^2 N \sum_{p,j,k \ge 1} p^{-j-k} \log p = B^2 N \sum_{p \ge 2} \log p \left( \sum_{j\ge 1}p^{-j}\right)^2,
\end{equation*}
thus
\begin{equation*}
\Sigma_2 \le B^2 N \sum_{p \ge 2} \frac{ \log p}{(p-1)^2} 
\end{equation*}
and, using Lemma \ref{dis}, 
\begin{equation}
\label{sigma2}
\Sigma_2 \le 0.8 B^2 N   .
\end{equation}
Thus
\begin{equation*}
T \le B^2 1.76 N,
\end{equation*}
and hence by~\eqref{eq:Spart1},~\eqref{eq:fnfnm} and~\eqref{eq:Tdef}
\begin{align}
\label{eq:fnfnm1}
\nonumber \left|\sum_{1\le n \le N}f(n)e(g(n))\right|\le & (B+1.76B^2) \frac{ N}{\log{N}} \\ &+\frac{1}{\log{N}}\left|\sum_{1\le nm \le N} f(n)f(m)\Lambda(m) e(g(mn))\right|.
\end{align}
Those pairs $m$, $n$  in which $m$ is of the form $p^k$ with $k\ge 2$ contribute an amount to the sum which is bounded by 
\begin{equation*}
\sum_{p,k\ge 2}|f(p^k)|(\log p)\sum_{n \le Np^{-k}} |f(n)|.
\end{equation*}
By \eqref{2} this is
\begin{equation*}
\le B^2 N \sum_{p,k\ge 2}p^{-k} \log p = B^2 N \sum_{p\ge 2}\log p \sum_{k\ge 2}p^{-k} \le B^2 N \sum_{p\ge 2}\frac{\log p }{p(p-1)}.
\end{equation*}

Now, using Lemma \eqref{dis}, we have that 
\begin{equation*}
 \sum_{p,k\ge 2}|f(p^k)|(\log p)\sum_{n \le Np^{-k}} |f(n)|\le 0.8 B^2 N,
\end{equation*}
and hence by~\eqref{eq:fnfnm1} the proof is completed.
\end{proof}
\subsection{Partition of hyperbola into rectangles}
We now partition the summation, over the domain $1\le pn\le N$, occurring in Lemma \ref{lem:bilinear} into rectangles and their complements. Assume $N\ge q$ and let
\begin{equation}
\label{J}
\mathbf{J}_i=\min\{i+1,[\log_2 N]-i+1, [\frac{1}{2}\log_2(EN/q)]\},
\end{equation}
with $E\ge 4$.
Define
\begin{equation*}
\mathbf{R}_i=(0,2^i]\times (N2^{-i-1},N 2^{-i}], \quad(0\le i \le \log_2 N).
\end{equation*}
In the remaining regions we place additional rectangles $\mathbf{R}_{ijk}$, for $j=1,2,...,\mathbf{J}_i$ and for each $j$,~ $2^{j-1} < k \le 2^j$, defined as
\begin{equation}
\label{R}
\mathbf{R}_{ijk}=(2^{i+j}/k,2^{i+j+1}/(2k-1)]\times ((k-1)N2^{-i-j},(2k-1)N2^{-i-j-1}].
\end{equation}
We do this for $j=1,2,...,\mathbf{J}_i$. The choice of $\mathbf{J}_i$ ensures that each $\mathbf{R}_{ijk}$ is a rectangle of the form $(P',P'']\times (N',N'']$
with
\begin{equation*}
P''-P' \ge \frac{1}{4},~ \space N''-N' \ge \frac{1}{4},~ \space (P''-P')(N''-N')\gg q.
\end{equation*}
Let $\mathbf{E}$ denote the set of points $(p,n)$ with $pn \le N$ which do not lie in any rectangle $\mathbf{R}_i$ or $\mathbf{R}_{ijk}$. Then $\mathbf{E}$ is the union of $\mathbf{E}_1$, $\mathbf{E}_2$ and $\mathbf{E}_3$, the unions of those $\mathbf{H}_i=\{(p,n)\in \mathbf{E}:(1,n) \in \mathbf{R}_i\}$ with $\mathbf{J}_i =i+1$, $\mathbf{J}_i=[\log_2 N]-i+1$ and $\mathbf{J}_i=[\frac{1}{2}\log_2(E N/q)]$ respectively.
\begin{lemma}
 The following estimate for the sum on the right of \eqref{lem:eq}, from the points $(p,n)$ in $\mathbf{E}$, holds
\begin{align}
\label{ext1}
\nonumber\sum_{\mathbf{E}} f(p)f(n)& e(pna/q) \log q \le B^27.26N +\\
 & +B^2 \frac{7.63}{\sqrt{E}} (Nq)^{\frac{1}{2}} (\log(4EN/q))^{\frac{1}{2}}(1+\log(64/E)+\log q).
\end{align}
\end{lemma}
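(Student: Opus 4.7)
The plan is to bound the exceptional contribution by the triangle inequality and then count weighted lattice points. By \eqref{2} we have $|f(p)f(n)e(pna/q)| \le B^2$, so the task reduces to bounding $\sum_{(p,n)\in \mathbf{H}_i}\log p$ for each stratum $\mathbf{H}_i$ and summing over $i$ in each of the three classes $\mathbf{E}_1$, $\mathbf{E}_2$, $\mathbf{E}_3$. For fixed $i$, the rectangles $\mathbf{R}_i$ and $\mathbf{R}_{ijk}$ (for $1\le j \le \mathbf{J}_i$, $2^{j-1}< k\le 2^j$) cover a sequence of nested shelves under the hyperbola $pn\le N$; what remains in $\mathbf{H}_i$ is a thin strip whose $p$-width and $n$-height are dictated by the stopping index $\mathbf{J}_i$ via \eqref{J}. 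Applying Brun--Titchmarsh (Theorem \ref{B-T}) to count primes in each short $p$-interval, and Theorem \ref{R1} or Lemma \ref{Li} together with partial summation to pass to $\sum \log p$, one writes $\sum_{(p,n)\in \mathbf{H}_i}\log p$ as a product of a $p$-length factor and an $n$-length factor.

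Next I would sum over $i$ according to which cutoff in \eqref{J} realises the minimum. When $(p,n)\in \mathbf{E}_1$ or $\mathbf{E}_2$ (i.e.\ $\mathbf{J}_i = i+1$ or $\mathbf{J}_i=[\log_2 N]-i+1$) the uncovered strip has one endpoint close to an axis; the sum over $i$ then collapses by the geometric series $\sum 2^{-n/2}$ from Lemma \ref{dis}, producing the first summand $B^2\cdot 7.26\,N$ in \eqref{ext1}. When $(p,n)\in \mathbf{E}_3$ (i.e.\ $\mathbf{J}_i=[\tfrac12\log_2(EN/q)]$) every strip has $p$-width times $n$-height of order $q/E$; the dyadic sum in $i$ yields the second summand, with the factor $1/\sqrt{E}$ coming from the geometric decay, $(\log(4EN/q))^{1/2}$ from the Brun--Titchmarsh denominator, and $1+\log(64/E)+\log q$ from the length of the $i$-range once $\mathbf{J}_i=[\tfrac12\log_2(EN/q)]$ is fixed.

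The main obstacle is the book-keeping of explicit constants rather than any deep estimate: one must track the constant $1.25506$ from Theorem \ref{R1} and the $2$ from Brun--Titchmarsh through the partial summations, verify that the series $\sum 2^{-n/2}$, $\sum\sqrt{n}\,2^{-n/2}$, $\sum\sqrt{n+1}\,2^{-n/2}$ from Lemma \ref{dis} combine to yield exactly $7.26$ and $7.63$, and ensure that the three regimes in \eqref{J} do not double-count contributions on the boundary where the minimum is attained by more than one term. A secondary care-point is the interaction between the Brun--Titchmarsh denominator $\log((P''-P')/q)$ and the lower bound $(P''-P')(N''-N')\gg q$ guaranteed by \eqref{J}: this is precisely what forces the $(\log(4EN/q))^{1/2}$ factor and must be handled uniformly in $i$.
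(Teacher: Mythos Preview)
Your plan diverges from the paper's argument at the very first step, and the divergence is fatal for recovering both the stated constants and the shape of the $\mathbf{E}_3$ bound. You replace $|f(p)f(n)|$ by $B^2$ and then try to bound $\sum_{\mathbf{E}}\log p$ by dyadic counting in $i$. The paper does \emph{not} do this: it applies Cauchy--Schwarz to each of $\mathbf{E}_1,\mathbf{E}_2,\mathbf{E}_3$ separately, pairing the two variables so that one factor carries $\sum|f(n)|^2$ (bounded via the ``few $p$ per $n$'' or ``few $n$ per $p$'' structure, the latter using Brun--Titchmarsh) and the other carries $\sum(\log p)^2$ or $\sum\log p$. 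For $\mathbf{E}_1$ this gives $B\bigl(2\sum_{n\le N}|f(n)|^2\bigr)^{1/2}\bigl(4N\sum_p p^{-2}(\log p)^2\bigr)^{1/2}\le 2.39\,B^2N$ using $\sum_p(\log p)^2/p^2\le 0.71$; for $\mathbf{E}_2$ the Cauchy split is between $\sum_n 8N|f(n)|^2/(n^2\log(4N/n^2))$ and $\sum_{p\le N}(\log p)^2$, yielding $4.87\,B^2N$. The $7.26$ is simply $2.39+4.87$ and has nothing to do with the dyadic series $\sum 2^{-n/2}$, $\sum\sqrt{n}\,2^{-n/2}$, $\sum\sqrt{n+1}\,2^{-n/2}$; those are used only in the \emph{next} subsection (``Completion of the Proof of Theorem~\ref{t1}'') when summing the rectangle contributions over $i$ and $j$, not here. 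That $4.87$ appears both as $\sum\sqrt{n+1}\,2^{-n/2}$ in Lemma~\ref{dis} and as the $\mathbf{E}_2$ constant is a numerical coincidence.

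The more serious problem is your treatment of $\mathbf{E}_3$. You claim the factor $(\log(4EN/q))^{1/2}$ comes ``from the Brun--Titchmarsh denominator'', but Brun--Titchmarsh produces a factor $1/\log$, never $\sqrt{\log}$. A half-power of a logarithm is the signature of Cauchy--Schwarz: in the paper the split is $\bigl(\sum_{\mathbf{E}_3}|f(n)|^2\log(N/n)\bigr)^{1/2}\bigl(B^2\sum_{\mathbf{E}_3}\log p\bigr)^{1/2}$, and inside the first factor the ratio $\log(N/n)/\log\bigl(\tfrac{64}{E}Nqn^{-2}\bigr)$ is bounded by $\tfrac{1}{4\log 2}\log(4EN/q)$, which then emerges under the square root. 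Likewise $1+\log(64/E)+\log q$ is not ``the length of the $i$-range'' but the common upper bound for the harmonic-type sums $\sum_p(\log p)/p$ and $\sum_n 1/n$ over the $\mathbf{E}_3$ ranges $\tfrac{\sqrt{E}}{4}(N/q)^{1/2}\le p\le \tfrac{8}{\sqrt{E}}(Nq)^{1/2}$ and the analogous $n$-interval. Your direct approach (no Cauchy) would produce integer powers of logarithms in the $\mathbf{E}_3$ term and, for $\mathbf{E}_2$, a constant of order $18$ rather than $4.87$, so it cannot establish \eqref{ext1} as stated.
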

\begin{proof}
Consider $\mathbf{E}_1$. For a given $p$, the number of $n$ for which $(p,n)\in \mathbf{E}_1$ is $\le 4 N p^{-2} $ for $p\le 2\sqrt{N}$, and this holds for any $(p,n)\in \mathbf{E}_1$, and for a given $n$, there are $\le 2$ primes $p$ for which $(p,n)\in \mathbf{E}_1$. Hence, by Cauchy's inequality,
\begin{align*}
\sum_{\mathbf{E}_1} | f(p)f(n)| \log p &\le \left( \sum_{\mathbf{E}_1} | f(n)|^2\right)^{\frac{1}{2}}\left( \sum_{\mathbf{E}_1} B^2(\log p)^2\right)^{\frac{1}{2}}\le \\
&\le B \left( 2 \sum_{n \le N} | f(n)|^2\right)^{\frac{1}{2}}\left( \sum_{p \le 2\sqrt{N}} 4 N p^{-2}  (\log p)^2\right)^{\frac{1}{2}},
\end{align*}
which, using Lemma \ref{dis}, is bounded above by $2.39  B^2 N$.
For each pair $(p, n) \in \mathbf{E}_2$ we see that $n \le (2N)^{\frac{1}{2}}$, and for a given $n$ the $p$ with $(p, n) \in \mathbf{E}_2$ all lie in an interval of length $4Nn^{-2}$. Thus by Theorem \ref{B-T} there are 
\begin{equation*}
\le 8\frac{N}{n^2\log 4Nn^{-2}}
\end{equation*}
such $p$. For a given $p$ there is at most one $n$ for which $(p,n)\in \mathbf{E}_2$. 
We have by partial summation
\begin{equation*}
\sum_{n \le \sqrt{2N}} |f(n)|^2 \frac{N}{n^2 \log 4Nn^{-2}} \le B^2 \left( \sum_{n \le \sqrt{N}} \frac{N}{n^2 \log 4Nn^{-2}} +\frac{(\sqrt{2}-1)\sqrt{N}}{\log 2}\right)
\end{equation*} 
\begin{equation*}
\le B^2 N \left (\frac{1 }{\sqrt{2N} \log 2} - \int_1^{\sqrt{N}} \frac{2}{x^2 \log 4Nx^{-2}}\left( \frac{1}{\log 4Nx^{-2}}-1\right)dx+\frac{(\sqrt{2}-1)}{\sqrt{N}\log 2 }\right)
\end{equation*}
\begin{equation*}
\le N B^2\big(\frac{\sqrt{2}-1}{2}\frac{1 }{\sqrt{N} \log 2} +\frac{1}{ \log 4N}+\frac{\text{Li}(\sqrt{N})}{4 \sqrt{N}} +\frac{(\sqrt{2}-1)}{\sqrt{N}\log 2 } \big),
\end{equation*}
which, by Lemma \ref{Li}, is bounded above by $ 2.35\frac{B^2N }{ \log N}$
 and 
 \begin{equation*}
\sum_{p \le N} \log^2 p \le \pi (N) \log^2 N - 2\int_2^N \pi(x) \frac{2 \log x}{x} dx \le 1.26 N \log N.
\end{equation*}
Thus
\begin{equation*}
\sum_{\mathbf{E}_2} | f(p)f(n)| \log p \le \left( \sum_{\mathbf{E}_2} | f(n)|^2\right)^{\frac{1}{2}}\left( \sum_{\mathbf{E}_2}B^2 (\log p)^2\right)^{\frac{1}{2}}\le 
\end{equation*}
\begin{equation*}
\le B\left(  \sum_{n \le \sqrt{2N}} | f(n)|^2 8\frac{N}{n^2\log 4Nn^{-2}}\right)^{\frac{1}{2}}\left( \sum_{p \le N}  (\log p)^2\right)^{\frac{1}{2}}\le 4.87 B^2 N.
\end{equation*}
For each such $p$ the number of $n$ for which $(p, n) \in \mathbf{E}_3$ is $\le \frac{8}{\sqrt{E}}(Nq)^{\frac{1}{2}} p^{-1} $. For each $n$, the $p$ for which $(p,n)  \in \mathbf{E}_3$ lie in an interval of length $\le \frac{8}{\sqrt{E}}(Nq)^{\frac{1}{2}} n^{-1}$, so that, by Theorem \ref{B-T}, there are 
\begin{equation*}
\le \frac{16}{\sqrt{E}} \frac{\sqrt{Nq}}{n\log \frac{64}{E}Nqn^{-2}} 
\end{equation*}
such $p$. 
When $(p, n) \in \mathbf{E}_3$ we have $\frac{\sqrt{E}}{4}(N/q)^{\frac{1}{2}} \le p \le \frac{8}{\sqrt{E}}(Nq)^{\frac{1}{2}}$ and $\frac{\sqrt{E}}{16}(N/q)^{\frac{1}{2}} \le n \le \frac{4}{\sqrt{E}}(Nq)^{\frac{1}{2}}$, thus the following sum on $p$ and $n$ will be restricted to these intervals.
Using Theorem \ref{R1} we have
\begin{equation*}
\sum_p \frac{\log p}{p}\le 1.26 (1+\log(32/E)+\log q)
\end{equation*}
and
\begin{equation*}
\sum_n |f(n)^2| \frac{1}{n} \le B^2 \left( 1 +\log(64/E)+\log q \right).
\end{equation*}
Therefore
\begin{equation*}
\sum_{\mathbf{E}_3} | f(p)f(n)| \log p \le \left( \sum_{\mathbf{E}_3} | f(n)|^2 \log N/n\right)^{\frac{1}{2}}\left( B^2\sum_{\mathbf{E}_3} \log p\right)^{\frac{1}{2}} 
\end{equation*}
\begin{equation*}
\le B \sqrt{Nq}\left( \frac{16}{\sqrt{E}}\sum_{n } | f(n)|^2 \frac{\log N/n }{n\log \frac{64}{E}Nqn^{-2}}\right)^{\frac{1}{2}}\left( \frac{8}{\sqrt{E}}\sum_{p }  \frac{\log p}{p}\right)^{\frac{1}{2}},
\end{equation*}
and as the above ratio of the logarithms is less than $\frac{1}{4\log 2}\log(4 EN/q)$ 
\begin{align*}
\sum_{\mathbf{E}_3} | f(p)f(n)| \log p& \le \\&\le B^2 (Nq)^{\frac{1}{2}}  \frac{7.63}{\sqrt{E}}(\log(4 E N/q))^{\frac{1}{2}} (1 +\log(64/E)+\log q).
\end{align*}
Combining the above estimates gives \eqref{ext1}.

\end{proof}

\subsection{The fundamental estimate}
Here we will develop a tool to bound the bilinear forms onto the rectangles defined in the previous section, in doing this we follow \cite[Section 4]{M-V}.
\begin{lemma}
\label{lem:bilinearrectangles}
Let $K,Q,M,X,Y\in \mathbb{R}^+$ and for each $1\le k \le K$ let 
\begin{align*}
\cR(k)=\cL(k)\times \cM(k),
\end{align*}
be a rectangle satisfying
\begin{align}
\label{eq:LMk}
\cL(k)\subseteq (0,Q), \quad \cM(k)\subseteq (0,M],
\end{align}
and
\begin{align*}
\cL(k)=(Q'(k),Q''(k)], \quad \cM(k)=(M'(k),M''(k)],
\end{align*}
for some $Q'(k),Q''(k)$ and $M'(k),M''(k)$ satisfying
\begin{align*}
Q''(k)-Q'(k)\le X, \quad M''(k)-M'(k)\le Y, \quad M''(k)\le 2M'(k).
\end{align*}
Suppose that the rectangles $\cR(k)$ are disjoint for each $1\le k \le K$. Then for any function $f(n)$ satisfying \eqref{2},  define 
\begin{equation*}
I=\sum_{k=1}^{K}\sum_{(p,n)\in \cR(k)}f(p)f(n)e(pna/q)\log{p}.
\end{equation*}
Then, if $(a,q)=1$ and $q \le XY$, we have 
\begin{align}
\label{eqI}
\nonumber I \le  B^2 \Big( 2.52MQ(Y+1)\log  Q+128 \prod _{p>2} \left( 1+\frac{1}{p^3-p^2-2p}\right)MQ \\ \Big(\frac{\pi^4}{6^2}\frac{XY}{\varphi (q)}
  +0.91X+ Y\log2X+0.91 q\Big(\log(2XY/q)+ 0.94\Big)+\frac{q\pi^2}{12}\Big)\Big)^{\frac{1}{2}}.
\end{align}
\end{lemma}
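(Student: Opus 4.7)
The plan is to apply Cauchy--Schwarz on the $n$ variable to convert $I$ into a second moment on the prime side, and then to decompose the resulting double prime sum into a diagonal piece (bounded via a Chebyshev-type estimate for $\sum_p \log^2 p$) and an off-diagonal piece handled by a geometric-sum estimate in $n$ combined with Brun--Titchmarsh and Siebert's theorem.

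First, I would rewrite
\begin{equation*}
I = \sum_{n} f(n)\, g(n), \qquad g(n) = \sum_{k=1}^{K} \mathbf{1}_{\cM(k)}(n) \sum_{p \in \cL(k)} f(p)(\log p)\, e(pna/q),
\end{equation*}
and apply Cauchy--Schwarz, using $|f(n)|^2 \le B^2$, to obtain $|I|^2 \le B^2 M \sum_{n \le M}|g(n)|^2$. Expanding the square and interchanging the order of summation yields
\begin{equation*}
\sum_{n}|g(n)|^2 = \sum_{k_1,k_2} \sum_{\substack{p_j \in \cL(k_j) \\ j=1,2}} f(p_1)\overline{f(p_2)}(\log p_1)(\log p_2) \sum_{n \in \cM(k_1) \cap \cM(k_2)} e\bigl((p_1-p_2)na/q\bigr),
\end{equation*}
where the inner $n$-sum is a geometric progression of length at most $Y+1$, hence bounded by $\min\bigl(Y+1,\tfrac{1}{2}\|(p_1-p_2)a/q\|^{-1}\bigr)$.

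The diagonal $p_1 = p_2$ forces $k_1 = k_2$ (otherwise $\cR(k_1)$ and $\cR(k_2)$ would share the point $(p_1,n)$, contradicting disjointness) and contributes at most $B^2(Y+1)\sum_k \sum_{p \in \cL(k)}\log^2 p$; after a Chebyshev-type inequality together with $\sum_k |\cL(k)||\cM(k)| \le MQ$, this accounts for the leading $2.52\,MQ(Y+1)\log Q$ term. For the off-diagonal I would group pairs by the value of $d = p_1 - p_2$ and its residue class $da \bmod q$: pairs with $d \ne 0$ but $q \mid d$ are counted via Brun--Titchmarsh (Theorem \ref{B-T}) for primes in an arithmetic progression modulo $q$, producing the $\frac{\pi^4}{36}\,XY/\varphi(q)$ contribution; pairs with fixed nonzero $d$ coprime to $q$ use Siebert's Theorem \ref{thm:siebert}, whose multiplicative constant is exactly the $\prod_{p>2}\bigl(1+(p^3-p^2-2p)^{-1}\bigr)$ pulled out in front of the bracket; and summing the $\min$-bound $\min(Y+1,\|da/q\|^{-1}/2)$ over $d$, using $(a,q)=1$ to rearrange residues bijectively and $q \le XY$ to combine logarithms, yields the remaining $X$, $Y\log 2X$, $q(\log(2XY/q)+0.94)$ and $q\pi^2/12$ pieces. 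Taking square roots then gives \eqref{eqI}.

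The main obstacle is not the structural argument, which is a standard Cauchy--Schwarz plus Siebert/Brun--Titchmarsh combination, but the explicit constant accounting. Each application of Brun--Titchmarsh, Siebert and the partial-summation steps of Lemma \ref{dis} must be executed carefully so that $2.52$, $128$, $\pi^4/36$, $0.91$, $0.94$ and $\pi^2/12$ emerge cleanly in the final bound; the hypothesis $q \le XY$ is used to consolidate various logarithmic factors into a single $\log(2XY/q)$, and the dyadic constraint $M''(k) \le 2M'(k)$ keeps $|\cM(k_1) \cap \cM(k_2)|$ well-behaved when $k_1 \ne k_2$.
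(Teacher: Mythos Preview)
Your high-level strategy---Cauchy--Schwarz in the $n$-variable, a diagonal/off-diagonal split, and Siebert's sieve for prime pairs---matches the paper's. Two technical ingredients are missing, however, and they are exactly what produces the constants in \eqref{eqI}.

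First, the paper does \emph{not} bound the inner $n$-sum by the raw geometric-series estimate $\min(Y+1,\tfrac12\|(p_1-p_2)a/q\|^{-1})$. Instead it majorises $\mathbf{1}_{\cM}$ by a Fej\'er-type weight $w(n)=\max\{0,2-|2n-2M'-Y|Y^{-1}\}$ and applies Poisson summation, obtaining a bound of shape $\min\{2Y,\, cY^{-1}\|(p-p')a/q\|^{-2}\}$ with quadratic decay in $\|\cdot\|$. This $\|\cdot\|^{-2}$ is then fed into Korobov's Lemma~14, which evaluates $\sum_{b\le Z}\min\{Y^2,\|b/r\|^{-2}\}$ explicitly. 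The constants $\pi^4/36$, $0.91$, $0.94$, $\pi^2/12$ and the $\log(2XY/q)$ all trace back to this Poisson--Korobov step together with the divisor decomposition $\prod_{p\mid h,\,p>2}\frac{p-1}{p-2}\le \prod_{p>2}\bigl(1+\tfrac{2}{p^2-p-2}\bigr)\sum_{m\mid h}m^{-1}$. Your unsmoothed $\|\cdot\|^{-1}$ route leads to a different (Vinogradov-type) residue sum and would not recover these particular numbers.

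Second, the $\frac{\pi^4}{36}\,XY/\varphi(q)$ term is not obtained by a separate Brun--Titchmarsh argument for the case $q\mid d$; Brun--Titchmarsh is nowhere used in this lemma. That term is the main contribution of the Korobov bound after summing over the divisors $m\mid h$ (specifically the piece $\sum_{r\mid q}\sum_s XY/(rs^2 q)$). So your outline is sound, but the path to the stated constants runs through smoothing, Poisson summation and Korobov's lemma rather than through the direct geometric-sum bound and a Brun--Titchmarsh case split.
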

\begin{proof}
Let $\cR=\cL\times \cM$ be one of the rectangles $\cR(k)$. By Cauchy's inequality
\begin{align}
\label{FE2}
\nonumber \left|\sum_{(p,n)\in \cR}f(p)f(n)e(pna/q)\log{p}\right| \le & \left(\sum_{n \in \cM} |f(n)|^2 \right)\\
&\cdot \left(\sum_{n \in \cM}\left|\sum_{p\in \cL}f(p)f(n)e(pna/q)\log{p}\right|^2 \right).
\end{align}
We now introduce the smoothing factor
\begin{equation*}
w(n)=\max\{0,2-|2n-2M'-Y|Y^{-1}\},
\end{equation*}
such that $w(n) \ge 1$ for $n \in \cM$. Note that the above is a variation of Fejer kernel and we choose it following Montgomery and Vaughan. Aiming to improve the result it would surely be interested to chose other kernels. We also introduce $g(n)=\max\{0,1-|n|\}$ and note that for the Fourier transform of $g$ we have  
\begin{equation}
\label{eq:g}
|\widehat{g}(n)|=\left(\frac{\sin \pi n}{\pi n}\right)^2.
\end{equation}
Thus the second factor on the right of \eqref{FE2} is bounded above by 
\begin{equation*}
 \sum_n w(n)\Big|\sum_{p\in \cL}f(p)f(n)e(pna/q)\log{p}\Big|^2  =
\end{equation*}
\begin{equation*}
=\sum_{p,p' \in \cL} f(p)f(p') (\log p)(\log p')\sum_n w(n)e((p-p')na/q), 
\end{equation*}
using Poisson formula and \eqref{eq:g}, we obtain
\begin{equation*}
\le B^2 (\log Q)^2 \sum_{p,p' \in \cL} \min \left\{2Y, \frac{\frac{4}{\pi^2}+\frac{1}{6}}{ Y \Vert \frac{(p-p')a}{q}\Vert^2 }\right\}.
\end{equation*}
By Cauchy's inequality, and Theorem \ref{R1},
\begin{align*}
I \le &B (\log Q)  \left(\sum_k \sum_{n \in \cM} |f(n)|^2 \right)^{\frac{1}{2}}\left(\sum_k  \sum_{p,p' \in \cL} \min \left\{2Y, \frac{0.58}{ Y \Vert \frac{(p-p')a}{q}\Vert^2}\right\}\right)^{\frac{1}{2}}   \\
\le& B^2 (\log Q) \sqrt{M} \left( 2.52(Y+1) \frac{Q}{\log Q}+\sum_{0<h \le X}\sum_{\substack{p \le Q \\ p+h=p'}}  \min \left\{2Y, \frac{0.58}{ Y \Vert \frac{ha}{q}\Vert^2}\right\}\right)^{\frac{1}{2}}.
\end{align*}
Now from Theorem \ref{thm:siebert} we obtain
\begin{equation}
\label{I_1}
I \le B^2\left(MQ2.52(Y+1)\log Q+16 \prod _{p>2} \left( 1-\frac{1}{p^2}\right)MQV\right)^{\frac{1}{2}},
\end{equation}
where
\begin{equation*}
V=\sum_{0<h \le X}\prod_{p|h, p>2} \frac{p-1}{p-2} \min \left\{2Y, \frac{0.58}{ Y \Vert \frac{ha}{q}\Vert^2}\right\}.
\end{equation*}
Hence we need to bound $V$. Now we have
\begin{equation*}
\prod_{p|h, p>2} \frac{p-1}{p-2}= \prod_{p|h, p>2} \Big( 1+\frac{2}{p^2-p-2}\Big)\prod_{p|h, p>2} \Big( 1+\frac{1}{p}\Big)   
\end{equation*}
\begin{equation*}
\le \prod_{ p>2} \Big( 1+\frac{2}{p^2-p-2}\Big) \sum_{m|h}\frac{1}{m},
\end{equation*}
so that
\begin{equation*}
V\le \prod_{ p>2} \Big( 1+\frac{2}{p^2-p-2}\Big)\left(\sum_{m \le X} 1/m\sum_{n \le X/m}\min \left\{2Y, \frac{0.58}{ Y \Vert \frac{mna}{q}\Vert^2}\right\}\right).
\end{equation*}
The innermost sum is of the form 
\begin{equation*}
W \le 0.58\frac{1}{Y} \sum_{ b\le Z} \min \left\{(0.58)^{-1}2Y^2,\frac{1}{ \Vert \frac{ba}{r}\Vert^2}\right\}
\end{equation*}
with $r=q/(m,q)$ and $(b,r)=1$. Using \cite[Lemma 14]{Korobov} this is seen to satisfy
\begin{equation*}
W =  \min\left\{2YZ, 4\sqrt{2}\left(0.58\right)^{\frac{1}{2}}(Z+r)\left(\left(\frac{2}{0.58}\right)^{\frac{1}{2}} Y+r\right)r^{-1}\right\}.
\end{equation*}
Therefore by
\begin{align*}
\sum_{\substack{m \le X \\ (m,q)XY \le mq}}\frac{2XY}{m^2}+
 \sum_{\substack{m \le X \\ (m,q)XY > mq}}\frac{1}{m}\Big( \frac{8XY}{mq}(m,q)+4\sqrt{2}\left(0.58\right)^{\frac{1}{2}}\frac{X}{m}+\\+8Y+4\sqrt{2}\left(0.58\right)^{\frac{1}{2}}\frac{q}{(m,q)}\Big)\le
\end{align*}
 \begin{align*}
\le 2\sum_{r|q} \sum_{s>XY/q} \frac{XY}{r^2s^2}+8 \sum_{r|q}\sum_s \frac{XY}{rs^2q}+ \frac{\pi^2}{6}4\sqrt{2}\left(0.58\right)^{\frac{1}{2}} X+8 Y\log2X+\\+4\sqrt{2}\left(0.58\right)^{\frac{1}{2}} \sum_{r|q}\sum_{s<XYr/q}\frac{q}{r^2s},
\end{align*}
we obtain
\begin{align*}
V\le &  8\prod_{ p>2} \Big( 1+\frac{2}{p^2-p-2}\Big)\Big(\frac{\pi^4}{6^2} XY\varphi (q)^{-1} + 0.91X+\\
&+Y\log2X+ 0.91 q\Big(\log(2XY/q)+ \sum_{n=1}^{\infty} \frac{\log n}{n^2}\Big)+\frac{q\pi^2}{12}\Big).
\end{align*}
Thus from the above bound, Lemma \ref{dis} and \eqref{I_1} we obtain the desired result.
\end{proof}
\subsection{Completion of the Proof of Theorem \ref{t1}}
Note that in the following argument we will extensively use Lemma \ref{dis} and refer to the notation of Theorem \ref{t1}.
We first apply \eqref{eqI} to the rectangle $\mathbf{R}_i$. We take $K=1$, $X=Q=2^i$, $Y=M=N2^{-i}$. Thus 
\begin{align}
 \label{Ri}
\nonumber \mid \sum_{(p,n) \in \mathbf{R}_i} &f(p)f(n) e(pna/q) \log q  \mid \le 
 \sqrt{\log 2}a_1 N\sqrt{\frac{i}{2^i}}+a_2\frac{N}{\sqrt{\phi(q)}}+\\ &+a_3\sqrt{N2^i}+\sqrt{\log 2}a_4 N\sqrt{\frac{i+1}{2^i}}+a_5\sqrt{Nq \log (2N/q)}+a_6\sqrt{qN}.
\end{align}
Next, for each pair $i$, $j$ with $1 \le j \le J_i$ we apply \eqref{eqI} to the family of $ 2^{j-1}$ rectangles $\mathbf{R}_{ijk}$ with $2^{j-1} < k \le 2^j$. By \eqref{R} we may take $K =2^{j-1}$, $M=N2^{-i}$, $Q=2^{i+1}$, $X=2^{i-j+1}$, $Y=\frac{E}{2} N2^{-i-j}$. Thus, by \eqref{J}, $XY \ge q$, so that the conditions for \eqref{eqI} to hold are satisfied. Hence
\begin{equation*}
\left|\sum_{2^{j-1} < k \le 2^j} \sum_{(p,n) \in \mathbf{R}_{ijk}} f(p)f(n) e(pna/q) \log q  \right| \le \sqrt{\log 2}(a_1+a_4)_1 E\sqrt{\frac{i+1}{2^{i+j}}}N+
\end{equation*}
\begin{equation*}
+\sqrt{2}a_2 E\frac{1}{2^j}\frac{N}{\sqrt{\phi(q)}}+a_3 2 \sqrt{2^{i-j}}\sqrt{N}+ a_5 (2Nq \log \frac{E}{2} N/q))^{\frac{1}{2}}+a_6 \sqrt{2Nq}.
\end{equation*}
By \eqref{J} $J_i \le  \frac{1}{2}\log_2 (EN/q)$. Hence, summing over those $j$ with  $1 \le j \le J_i$ we obtain
\begin{equation*}
\left| \sum_{1 \le j \le J_i}\sum_{2^{j-1} < k \le 2^j}\sum_{(p,n) \in \mathbf{R}_{ijk}}  f(p)f(n) e(pna/q) \log q  \right|
\end{equation*}
\begin{equation*}
\le \sqrt{\log 2}\frac{\sqrt{2}}{\sqrt{2}-1}(a_1+a_4)_1 E\sqrt{\frac{i+1}{2^{i}}} N+\sqrt{2}a_2 E\frac{N}{\sqrt{\phi(q)}}+\frac{2\sqrt{2}}{\sqrt{2}-1}a_3  \sqrt{2^{i}}\sqrt{N}+
\end{equation*}
\begin{equation*}
+\frac{\sqrt{2}}{2} \left(a_5 (Nq \log (\frac{E}{2} N/q))^{\frac{1}{2}} +a_6 \sqrt{Nq}\right)\log_2 (EN/q).
\end{equation*}
Therefore, by \eqref{Ri}, summing over $i$ with $0 \le i \le \log_2 N$, we can obtain  
\begin{equation*}
\left| \sum_{\substack{pn \le N  \\  (p,n)   \not\in \mathbf{E}  }} f(p)f(n) e(pna/q) \log q \right| \le 
\end{equation*}
\begin{equation*}\Big(  8.12(a_1+a_4) E+ 18.9a_3+3.46a_1+4.6 a_4\Big)N+a_2 \frac{E\sqrt{2}+1}{\log 2}\frac{N\log N}{\phi(q)^{\frac{1}{2}}}+
\end{equation*}
\begin{equation*}
+\Big(\frac{a_5 }{\sqrt{2}\log 2}+\frac{a_6 }{\sqrt{2}\log 2}\frac{1}{\sqrt{\log(EN/q)}}+\frac{a_5\log(2N/q)}{(\log(EN/q))^{\frac{3}{2}}}+ \frac{a_6}{(\log(EN/q))^{\frac{3}{2}}}\Big)\cdot
 \end{equation*}
\begin{equation*}
\cdot\frac{(Nq )^{\frac{1}{2}}}{\log 2} (\log (EN/q))^{\frac{3}{2}}\log N. 
\end{equation*}

This with \eqref{ext1} and   Lemma \ref{lem:bilinear} gives  Theorem \ref{t1}.
\subsection{Proof of Corollary \ref{c1}}
Let $S(\alpha, u)= \sum_{n \le u} f(n) e(n\alpha)$. Then
\begin{align*}
S(\alpha)=e((\alpha - \beta)N)S(\beta,N)-2\pi i(\alpha-\beta) \int_1^N S(\beta,u)e((\alpha-\beta)u)du
\end{align*}
Suppose that $\beta = b/r$ with $(b,r)=1$ and $r \le N$. Then, on using that $|S(\alpha, u)|\le B r$ when $u \le r$ and Theorem \ref{t1} when $u > r$ we obtain 
\begin{align}
\label{eqq1}
\nonumber \left|S(\alpha) \right|\le &\left( b_1(B,E)\frac{N}{\log N}+\frac{b_2(B,E)N}{\sqrt{\phi(r)}}
+b_3(B,E, N/r))\sqrt{rN}(\log (EN/r))^{\frac{3}{2}}   \right) \\&
\cdot(1+2\pi (N-r)| \alpha -b/r|)+Br^22\pi | \alpha -b/r|.
\end{align}
Here we use, from \cite[Theorem 15]{Rosser}, that for $n\ge 3$
\begin{equation*}
\phi(n) > \frac{n}{e^C \log \log n+ \frac{2.51}{\log \log n}},
\end{equation*}
with $C$ the Euler--Mascheroni constant.
If $q > N^\frac{1}{2}$, then we take $b=a$, $r=q$, which gives 
\begin{equation*}
S(\alpha)\le  (1+2\pi)b_1(B, E) \frac{N}{\log N}+B2\pi+ (1+2\pi)\cdot 
\end{equation*}
\begin{equation*}
\cdot\Big( b_2(B,E)\frac{(e^C \log \log R+ \frac{2.51}{\log \log R})^{\frac{1}{2}}}{(\log ER)^{\frac{3}{2}}}+b_3(B,E, R)\Big)\frac{(\log E R)^{\frac{3}{2}}}{\sqrt{R}}N .
\end{equation*}
If $q \le N^\frac{1}{2}$, then by Dirichlet's theorem there exist $b$, $r$ such that $(b,r)=1$, $r \le 2N/q$ and $\mid \alpha -b/r\mid \le q/(2rN)$. Thus, either $r=q$ or $1 \le | ar -bq|= rq| (\alpha - b/r)-(\alpha-a/q)| \le q^2/(2N)+r/q \le \frac{1}{2}+r/q$, thus  in either case $r \ge \frac{1}{2}q$. Therefore $|\alpha - b/r| \le N^{-1}$ and consequently, by \eqref{eqq1}, Corollary \ref{c1} follows once more.
\section{Explicit Burgess bound for composite moduli}
\label{B}
We now prove Theorem \ref{TB}.
For the following result see~\cite{Robin} and~\cite[p.\ 43]{Sandor}.
\begin{lemma}
\label{lem:tauub}
For any integer $n\ge 3$ we have 
\begin{align*}
\log d (n) \le \frac{\log q }{\log \log q}\left(\log 2+\frac{\log 2}{\log \log q}+\frac{4.7626 \log 2}{(\log \log q)^2} \right).
\end{align*}
\end{lemma}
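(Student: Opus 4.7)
The plan is to recognize this statement as a direct algebraic rearrangement of a well-known explicit upper bound on the divisor function due to Robin \cite{Robin} (also reproduced in \cite{Sandor}), which itself is the explicit form of the classical Ramanujan--Wigert asymptotic $d(n) = n^{o(1)}$. Robin's theorem gives an inequality of the shape
\begin{equation*}
\log d(n) \le \frac{\log 2 \cdot \log n}{\log\log n}\bigl(1 + \varepsilon(n)\bigr),
\end{equation*}
with $\varepsilon(n)$ bounded by an explicit expansion in $1/\log\log n$. Factoring out $\log 2$ and expanding the error to second order in $1/\log\log n$ yields exactly the trinomial $\log 2 + \log 2/\log\log q + 4.7626 \log 2/(\log\log q)^2$ appearing in the display. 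So the one-line proof is: apply Robin's theorem and rearrange.

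If one wanted to rederive such a bound in isolation, the standard route is as follows. Writing $n = \prod_p p^{a_p}$ gives $\log d(n) = \sum_p \log(a_p+1)$. Pick a threshold $T$ (to be chosen near $\log n$) and split the sum at $p = T$. For $p > T$, apply the elementary inequality $\log(a+1) \le a \log 2$ (valid for $a\ge 1$) together with $\sum_{p>T} a_p \log p \le \log n$ to bound the large-prime contribution by $(\log 2/\log T)\log n$. For $p \le T$, use $a_p \le \log n/\log p$ to bound each term by $\log\log n + O(1)$, and sum over $p \le T$ via an effective prime-counting estimate such as Theorem \ref{R1}. Optimizing $T$ to balance the two contributions produces the leading term $\log 2\cdot \log n/\log\log n$, with lower-order corrections falling out from the bookkeeping.

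The main obstacle is not the structure of the argument, which is classical, but the explicit constants. Pinning down the numerical value $4.7626$ in the third-order correction requires a careful optimization of $T$, explicit control of the prime sums using Rosser--Schoenfeld type bounds, and a computer verification of the small-$n$ range in which the asymptotic inequality would otherwise fail. This numerical work is precisely what is carried out in \cite{Robin} and summarized in \cite{Sandor}, so the honest and efficient proof is simply to quote those references. I would therefore present the lemma with a one-sentence proof citing \cite{Robin} and \cite[p.~43]{Sandor}, treating the small inconsistency between the quantifier (``integer $n\ge 3$'') and the variable on the right-hand side as a typo for $q\ge 3$.
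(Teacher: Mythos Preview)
Your proposal is correct and matches the paper exactly: the paper does not prove this lemma at all but simply prefaces it with ``For the following result see~\cite{Robin} and~\cite[p.\ 43]{Sandor}'', which is precisely the citation you arrive at. Your additional sketch of the splitting argument and your remark about the $n$/$q$ typo are accurate but go beyond what the paper itself provides.
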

\begin{theorem}
\label{TB1}
Let $q$,~$k$,~$h$ and $m$ be as in Theorem \ref{P-V}. Let $\chi$ be a primitive character mod $ q$  and $\psi$ be any character mod $k$. For any integers $M$ and $N<q$ we have
\begin{align}
\label{Tr}
\left|\sum_{M<n\le M+N}\psi(n)\chi(n)\right|\le \frac{ mk N^{1/2}q^{3/16}(\log q\log \log q)^{\frac{1}{2}}}{q^{\frac{3 \log 2 }{2\log \log n}\left(1+\frac{1}{\log \log n}+\frac{4.7626}{(\log \log n)^2}\right) }}.
\end{align}

\end{theorem}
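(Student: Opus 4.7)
The plan is to adapt the classical Burgess argument at the level $r=2$ (which yields the exponent $3/16$ on $q$) to the convoluted character $\chi\psi$, tracking every constant explicitly. The starting point is the Burgess shift: for primes $p$ with $P<p\le 2P$ and $(p,qk)=1$ and integers $a\in[1,A]$, the identity
\[
\sum_{M<n\le M+N}\chi\psi(n)=\sum_{M-ap<n'\le M+N-ap}\chi\psi(n'+ap)
\]
holds, so averaging over all admissible $(a,p)$, and letting $P'$ count primes in $(P,2P]$ coprime to $qk$, gives
\[
|S|\le \frac{1}{AP'}\sum_{x}\Bigl|\sum_{a,p}\chi\psi(x+ap)\Bigr|+O(AP),
\]
where $x$ ranges over an interval slightly enlarged beyond $(M,M+N]$.

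Next, H\"older's inequality with exponent $2r=4$ bounds the first-moment sum in $x$ by the fourth moment $\sum_{x}|T(x)|^{4}$, where $T(x)=\sum_{a,p}\chi\psi(x+ap)$. Expanding $|T|^{4}$ produces a fourfold character sum in $x$, and one passes from incomplete to complete sums modulo $q$ by the standard completing trick. The tuples $(a_{1}p_{1},\dots,a_{4}p_{4})$ split into a diagonal part (coinciding pairs), contributing the main term of order $qA^{2}P^{2}$, and an off-diagonal part, whose complete character sum modulo $q$ is bounded by $\sqrt{q}\,d(q)$ via the composite-modulus form of Weil's theorem; this is where $d(q)$, and eventually $d(q)^{3/2}$ after the fourth root, enters. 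The factor $k$ in \eqref{Tr} appears because the conductor of $\chi\psi$ may be as large as $qk$, while the hypothesis $q>(hk)^{4}$ ensures that $\chi$ remains the dominant primitive part.

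Balancing $A$ and $P$ so that the shift error $AP$ matches the main contribution, then substituting Lemma \ref{lem:tauub} for $d(q)^{3/2}$, produces the bound \eqref{Tr} with an absolute constant; the tabulated $m=m(q_{0},h)$ collects the cumulative numerical losses from the shift, H\"older, completing, and Weil steps. The main obstacle I anticipate is making the composite-modulus Weil bound fully explicit: the classical theorem is stated over prime moduli, and the reduction via the Chinese Remainder Theorem must be performed with a concrete constant so that the losses fit inside the tabulated $m$ for each $(q_{0},h)$ in Table \ref{tab:m}. A secondary issue is estimating $P'$ using an explicit form of the prime number theorem, so that the averaging step does not lose a significant constant factor.
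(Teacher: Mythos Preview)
Your outline follows the classical Burgess recipe with prime shifts, but the paper's route is materially different, and your handling of the auxiliary character $\psi$ contains a genuine gap.

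In the paper Theorem~\ref{TB1} is not argued separately: it is obtained from Theorem~\ref{TB} by bounding $d(q)^{3/2}$ via Lemma~\ref{lem:tauub}. The substantive work is the proof of Theorem~\ref{TB}, carried out through the intermediate Theorem~\ref{Bint}. That argument differs from yours in two essential respects. First, the shifts are by $kuv$ with $u\in\{1\le u\le U:(u,q)=1\}$ and $1\le v\le V$, not by $ap$ with $p$ prime. Because every shift is a multiple of $k$, periodicity gives $\psi(n+kuv)=\psi(n)$, so $\psi$ factors out of the inner sum and the fourth-moment estimate (Lemma~\ref{lem:burmv}) involves only the primitive character $\chi$ modulo~$q$. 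The linear factor $k$ in the final bound then enters solely through the choice $V=\lfloor q^{1/4}/k\rfloor$. Second, the argument proceeds by induction on $N$: the endpoint error $E(y)$ is itself a shorter sum of the same shape and is absorbed via the inductive hypothesis, producing the term $2/\sqrt{g}$ inside $v_3$; your implicit $O(AP)$ has no such mechanism to close with an explicit constant~$m$.

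The gap in your version is the treatment of $\psi$. You run Burgess on $\chi\psi$ as a single character and invoke a composite-modulus Weil bound on the completed sums. But the conductor of $\chi\psi$ may be as large as $qk$, so completing and applying Weil yields a bound of order $(qk)^{3/16}$ times divisor factors of $qk$, not $k\,q^{3/16}d(q)^{3/2}$. The remark that ``$\chi$ remains the dominant primitive part'' does not convert one into the other. The paper's device of shifting by multiples of $k$ is precisely what decouples $\psi$ from the Weil step and produces the stated $k$-dependence; without it your argument does not reach~\eqref{Tr}. Incidentally, the prime-counting difficulty you flag as secondary is absent from the paper's proof, since no primes appear in the shift set.
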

The proof of the following is the same as~\cite[Lemma~1]{Trev} which deals with the case $q=p$ prime.
\begin{lemma}
\label{lem:multcong}
For integers $q,M,N,U$ satisfying
\begin{align*}
N<q ~~~~~\text{and}~~~~28\le U\le \frac{N}{12},
\end{align*}
let $I_q(N,U)$ count the number of solutions to the congruence
$$n_1u_1\equiv n_2u_2 \mod{q}, \quad M\le n_1,n_2\le M+N, \quad 1\le u_1,u_2\le U, \quad (u_1u_2,q)=1.$$
We have 
\begin{align*}
I_q(N,U)\le  2UN\left(\frac{NU}{q}+\log(1.85 U) \right).
\end{align*}
\end{lemma}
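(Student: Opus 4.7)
The plan is to follow Trev's proof of~\cite[Lemma~1]{Trev} almost verbatim; the hypothesis $(u_1 u_2, q) = 1$ guarantees that $u_1$ and $u_2$ are invertible modulo $q$, so every inversion used in Trev's prime-$q$ argument remains valid when $q$ is composite and no structural modification is needed.

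The natural first step is to split the quadruples $(n_1,u_1,n_2,u_2)$ counted by $I_q(N,U)$ according to whether the integer equality $n_1 u_1 = n_2 u_2$ holds (the \emph{diagonal} case, $k=0$) or $n_1 u_1 - n_2 u_2 = kq$ for some nonzero $k\in\mathbb{Z}$ (the \emph{off-diagonal} case). These two contributions will be estimated separately, the diagonal producing the term $2UN\log(1.85U)$ and the off-diagonal producing the main term $2N^2U^2/q$.

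For the diagonal, the standard parametrization sets $g=\gcd(n_1,n_2)$, $n_1=g\alpha$, $n_2=g\beta$ with $\gcd(\alpha,\beta)=1$; the equation $n_1u_1=n_2u_2$ then forces $u_1=\beta t$ and $u_2=\alpha t$ for some positive integer $t$. The constraints $g\alpha,g\beta\in[M,M+N]$ confine $g$ to an interval of length $\le N/\max(\alpha,\beta)$, and $\alpha t,\beta t\le U$ forces $t\le U/\max(\alpha,\beta)$. Summing over $(\alpha,\beta,g,t)$, using the elementary bound $\sum_{\beta\le U}\phi(\beta)/\beta^2\le \sum_{\beta\le U}1/\beta \le \log U+\gamma+1/(2U)$, and absorbing lower-order terms via the hypothesis $28\le U\le N/12$ yields the diagonal contribution $\le 2UN\log(1.85U)$. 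For the off-diagonal, for each fixed $k\ne 0$ with $|k|\le(M+N)U/q$ and each pair $(u_1,u_2)$ with $(u_1u_2,q)=1$, the line $n_1 u_1 - n_2 u_2 = kq$ carries integer points spaced by $(u_2/h, u_1/h)$ with $h=\gcd(u_1,u_2)$; restricting to $[M,M+N]^2$ yields at most $Nh/\max(u_1,u_2)+1$ points. Summing over $k$ and $(u_1,u_2)$, a classical equidistribution calculation gives the bound $\le 2N^2U^2/q$.

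The main obstacle is not the overall structure, which transfers from the prime case with no essential change, but the careful bookkeeping of absolute constants required to reach exactly $2UN(NU/q+\log(1.85U))$ rather than a weaker inequality: this relies on tight estimates for the sums $\sum_{\beta\le U}\phi(\beta)/\beta^2$ and $\sum_{|k|\le (M+N)U/q}1$, together with the numerical hypothesis $28\le U\le N/12$, exactly as in~\cite[Lemma~1]{Trev}. Combining the diagonal and off-diagonal contributions then yields the stated bound.
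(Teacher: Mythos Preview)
Your proposal is correct and matches the paper's approach exactly: the paper does not give an independent proof but simply states that the argument is the same as~\cite[Lemma~1]{Trev} for prime $q$, which is precisely what you propose, with the coprimality hypothesis $(u_1u_2,q)=1$ supplying the invertibility needed to carry over each step. Your sketch of the diagonal/off-diagonal split and the ensuing bookkeeping is faithful to Trevi\~no's argument.
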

Using an idea of Burgess~\cite{Burgess2}, with an improvement of Heath-Brown~\cite{HB}, we have the following
\begin{lemma}
\label{lem:burmv}
Let $q,k,V$ be integers with $V<q$. For any primitive $\chi \mod{q}$ we have 
$$\sum_{\lambda=1}^{q}\left|\sum_{v\le V}\chi(\lambda+kv)\right|^4\le 16qk^2V^2+4q^{1/2}k^4V^4d(q)^6.$$
\end{lemma}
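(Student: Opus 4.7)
The strategy is to expand the fourth power into a $4$-fold sum over tuples $(v_1,v_2,v_3,v_4)$, split these into a diagonal part (where the polynomials appearing as arguments of $\chi$ and $\overline{\chi}$ coincide modulo $q$) and an off-diagonal part, and estimate each part separately. Writing
\begin{align*}
\sum_{\lambda=1}^{q}\Bigl|\sum_{v\le V}\chi(\lambda+kv)\Bigr|^4
=\sum_{v_1,v_2,v_3,v_4\le V} T(v_1,v_2,v_3,v_4),
\end{align*}
where
\begin{align*}
T(v_1,v_2,v_3,v_4)=\sum_{\lambda=1}^{q}\chi\bigl((\lambda+kv_1)(\lambda+kv_3)\bigr)\overline{\chi}\bigl((\lambda+kv_2)(\lambda+kv_4)\bigr),
\end{align*}
I call a tuple \emph{diagonal} if $\{kv_1,kv_3\}\equiv\{kv_2,kv_4\}\pmod{q}$ as multisets and \emph{off-diagonal} otherwise.

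For a diagonal tuple the numerator and denominator polynomials coincide modulo $q$, so the integrand reduces to $|\chi(\cdot)|^2\in\{0,1\}$ and $|T|\le q$ trivially. To count diagonal tuples I fix $(v_2,v_4)$, giving $V^2$ choices, and observe that $(v_1,v_3)$ must match $(v_2,v_4)$ in one of two orderings modulo the equivalence $v\mapsto kv\pmod{q}$. With $d=\gcd(k,q)\le k$, the hypothesis $V<q$ gives $Vd/q<d\le k$, so each congruence $kv\equiv a\pmod{q}$ has at most $1+k\le 2k$ solutions in $[1,V]$. This yields at most $2\cdot(2k)^2 V^2=8k^2V^2$ diagonal tuples and a total diagonal contribution of at most $8qk^2V^2\le 16qk^2V^2$, accounting for the first term of the bound.

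For the off-diagonal part the rational function $(\lambda+kv_1)(\lambda+kv_3)/((\lambda+kv_2)(\lambda+kv_4))$ is not identically $1$ modulo $q$, and I would invoke the Weil-type bound for character sums to composite moduli of Burgess~\cite{Burgess2} and Heath-Brown~\cite{HB} to obtain $|T|\le 4q^{1/2}d(q)^6$. Summing over at most $V^4\le k^4V^4$ off-diagonal tuples produces the second term $4q^{1/2}k^4V^4d(q)^6$, completing the proof.

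The main obstacle is supplying the Weil--Heath-Brown estimate for composite $q$: for prime $q$ this is the classical Weil bound on rational function character sums, but for general $q$ one must combine the Weil bound on each prime-power component via the Chinese Remainder Theorem, with the divisor-function loss $d(q)^6$ arising from the combinatorial cost of this gluing and from controlling degenerate reductions modulo the prime divisors of $q$. Once this deep input is in hand, the rest of the argument is the elementary bookkeeping of diagonal counts sketched above.
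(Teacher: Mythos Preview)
Your overall strategy---expand, split into diagonal and off-diagonal, and apply a Weil-type bound---is the right shape, but there is a genuine gap in the off-diagonal step, and a related mis-definition of the diagonal.

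The claim that every off-diagonal tuple satisfies $|T(v_1,v_2,v_3,v_4)|\le 4q^{1/2}d(q)^6$ is not available from Burgess~\cite{Burgess2} or Heath-Brown~\cite{HB}. For composite $q$ the Chinese Remainder Theorem factorisation of the complete sum $T$ gives, for each prime power $p^a\| q$, a Weil saving $p^{a/2}$ \emph{only} when the rational function remains non-degenerate modulo $p$; if some differences $kv_i-kv_j$ are divisible by $p$ the local sum can be as large as $p^a$. Concretely, if $\chi$ is a quadratic character and $v_1=v_3$, $v_2=v_4$ with $kv_1\not\equiv kv_2\pmod q$, the rational function is a perfect square, $\chi$ of it is essentially $1$, and $|T|$ is of size $q$, not $q^{1/2}$---yet this tuple is off-diagonal in your sense. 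More generally, Burgess's Lemma~7 yields a bound of the form $8^{\tau(q)}q^{1/2}K$ with $K=\gcd\bigl(q,\prod_{i\ne j}(m_i-m_j)\bigr)$, and $K$ can be as large as $q$.

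What the paper (following Burgess and Heath-Brown) actually does is different in two ways. First, the split is not by ``multiset equality of shifts'' but by whether the four shifts take at most two or at least three distinct values; tuples with at most two distinct values are bounded trivially by $q$ and counted (after enlarging the shift range to $[1,kV]$) by $16k^2V^2$, giving the first term. Second, for tuples with at least three distinct values one does \emph{not} have a uniform pointwise bound: one inserts the Burgess estimate with the factor $K$ and then applies Heath-Brown's Lemma~2, which bounds the \emph{sum} $\sum'_{m_1,\dots,m_4}K$ over such tuples by $O\bigl((kV)^4 d(q)^{O(1)}\bigr)$. The factor $d(q)^6$ in the statement is the outcome of this averaging, not a per-tuple loss. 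Your argument conflates the averaged bound with a pointwise one; to repair it you must carry the $\gcd$ factor through and invoke Heath-Brown's summation lemma, at which point the proof becomes essentially the one in the paper.
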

\begin{proof}
Let 
$$S=\sum_{\lambda=1}^{q}\left|\sum_{v\le V}\chi(\lambda+kv)\right|^4.$$
Expanding the fourth power and interchanging summation, we have 
\begin{align*}
S \le  \sum_{1\le v_1,\dots,v_4\le kV}\left|\sum_{\lambda=1}^{q}\chi\left(\frac{(\lambda+v_1)(\lambda+v_2)}{(\lambda+v_3)(\lambda+v_4)}\right)\right|.
\end{align*}
Define $A_j=\prod_{i\not= j}(m_j-m_i)$ and $K=(q,A_j)$.
Using \cite[Lemma 7]{Burgess2} and arguing as in Burgess~\cite[Lemma 8]{Burgess2}, we obtain
\begin{equation}
\label{eq:b}
S\le 16qk^2V^2+8^{\tau(q)}q^{1/2}\sum_{j=1}^4\sum'_{m_1,\cdots,m_4} K,
\end{equation}
where $\sum'$ is the sum over all $m_1,\cdots,m_4 \leq kV$, which contains at least $3$ distinct elements and $\tau(n)$ is the function that counts the prime divisors of $n$.
Bounding the right hand side of \eqref{eq:b} as in Heath-Brown~\cite[Lemma~2]{HB}, we get 
\begin{align*}
S\le 16qk^2V^2+4q^{1/2}k^4V^4d(q)^6,
\end{align*}
which completes the proof.
\end{proof}

\subsection{Proof of Theorem~\ref{TB}}
We begin proving the following fundamental result.
\begin{theorem}
\label{Bint}
Let $q$ and $k$ be integers and $g\ge 2$,~$m$ and $h$ positive real numbers. Let $\chi$ be a primitive character modulo $q$  and $\psi$ be any character modulo $k$. Assume that 
\begin{equation*}
q> \max\left\{ \left(\frac{29 g}{m^2\log q \log \log q}\right)^8,\left( \frac{12}{g}\right)^4, k, (hk)^4 \right\},
\end{equation*} 
and
\begin{equation*}
q \ge m^2q^{3/8}\log q \log \log q\ge gq^{1/4}.
\end{equation*}
Define
\begin{align*}
v_1(m,q)=\frac{2(1+\frac{2}{e\log q}) }{m},
\end{align*}
\begin{align*}
v_2(m,q,g)
=\frac{\left(v_1(m,q)\right)^4}{g}+\frac{(\log \log q)^2\log \left(1.85\left(v_1(m,q)\right)^2q^{\frac{3}{8}}\frac{\log q }{g \log \log q}\right)}{(\log q)^2} ,
\end{align*}
and
\begin{align*}
v_3(m,q,g,h)=& 2 g\left(1-\frac{1}{h}-\frac{gq^{\frac{1}{4}}}{m^2q^{3/8}\log q (\log \log q)}\right)^{-1}\left(\frac{17  v_2(m,q,g)}{4g^3}\right)^{\frac{1}{4}}\cdot \\ &\cdot\left( e^{C} +\frac{2.51}{(\log \log q)^2}\right)+\frac{2}{\sqrt{g}}.
\end{align*}
If $v_3(m,q,g,h)\le m$ holds then, for any integers $M,N$, we have
\begin{align}
\label{II}
\left|\sum_{M<n\le M+N}\psi(n)\chi(n)\right|\le mkd(q)^{3/2} N^{1/2}q^{3/16}(\log q)^{\frac{1}{2}} (\log \log q)^{\frac{1}{2}}.
\end{align}

\end{theorem}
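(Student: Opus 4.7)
The plan is to execute Burgess's classical shift-and-average argument with all constants made explicit, following the blueprint of \cite{Trev} for prime moduli but carefully tracking the extra $d(q)^{3/2}$ factor that Lemma \ref{lem:burmv} introduces when $q$ is composite.

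First I would carry out the Burgess shift. For integers $u,v$ with $(u,q)=1$, $1\le u\le U$, and $1\le v\le V$, the identity
\[
\psi(n+kuv)\chi(n+kuv)=\psi(n)\chi(u)\chi(\bar u n+kv)
\]
is immediate because $\psi$ has period $k$ and $u\bar u\equiv 1\pmod q$. Shifting the range of summation by $kuv$ (incurring boundary error $\le 2kuv$) and then averaging over all admissible $(u,v)$ yields
\[
\Phi^{*}(U)\,V\cdot S=\sum_{\lambda=1}^{q}R(\lambda)\alpha(\lambda)f(\lambda)+O(kU^{2}V^{2}),
\]
where $f(\lambda)=\sum_{v\le V}\chi(\lambda+kv)$, $R(\lambda)=\#\{(u,n): \bar u n\equiv\lambda\pmod q\}$, $|\alpha(\lambda)|\le 1$, and $\Phi^{*}(U)=\#\{u\le U:(u,q)=1\}$.

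Next I would apply two rounds of Cauchy--Schwarz:
\[
\Big|\sum_\lambda R(\lambda)\alpha(\lambda)f(\lambda)\Big|\le\Big(\sum_\lambda R(\lambda)\Big)^{1/2}\Big(\sum_\lambda R(\lambda)^2\Big)^{1/4}\Big(\sum_\lambda|f(\lambda)|^4\Big)^{1/4},
\]
identifying the three factors as $(\Phi^{*}(U)N)^{1/2}$, $I_q(N,U)^{1/4}$ (bounded by Lemma \ref{lem:multcong}), and the completed fourth moment (bounded by Lemma \ref{lem:burmv}). I would then take $V\asymp q^{1/4}/g$, so that the second summand in Lemma \ref{lem:burmv} dominates and supplies the factor $kd(q)^{3/2}$, and $U\asymp v_{1}^{2}q^{3/8}\log q/(g\log\log q)$, so that the $\log(1.85U)$ term of Lemma \ref{lem:multcong} produces exactly the $(\log q\log\log q)^{1/2}$ in \eqref{II}. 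The Rosser--Schoenfeld bound $\Phi^{*}(U)\ge U/(e^{C}\log\log q+2.51/\log\log q)$ from \cite{Rosser} accounts for the $(e^{C}+2.51/(\log\log q)^{2})$ factor in $v_3$, and the boundary error $kU^{2}V/\Phi^{*}(U)$ contributes the $2/\sqrt{g}$ summand; the quantitative lower bounds on $q$ in the hypothesis are calibrated so that $28\le U\le N/12$ (making Lemma \ref{lem:multcong} applicable) and so that the boundary error is indeed absorbed.

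The main obstacle will be explicit bookkeeping rather than any conceptual difficulty. The numerical constant $17/4$ inside $v_3$ arises from combining the two terms of $16qk^{2}V^{2}+4q^{1/2}k^{4}V^{4}d(q)^{6}$ after the optimal choice of $V$; the factor $\bigl(1-1/h-gq^{1/4}/(m^{2}q^{3/8}\log q\log\log q)\bigr)^{-1}$ comes from passing from $\Phi^{*}(U)$ back to $U$ while respecting the constraints on $U$; and every other piece of $v_3$ has a direct counterpart in one of the steps above. Once the constants are assembled, the hypothesis $v_3(m,q,g,h)\le m$ is exactly what is required to convert the resulting estimate into \eqref{II}.
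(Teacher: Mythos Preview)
Your outline has the right general shape (shift, average, H\"older, mean-value lemmas) but misses the single structural idea that makes the constants close: the argument is an \emph{induction on $N$}, and the $2/\sqrt{g}$ summand in $v_3$ comes from applying the inductive hypothesis to the boundary sums, not from the trivial bound. Concretely, the shift by $y=kuv$ leaves two boundary character sums of length $y\le kUV\le N/g<N$; by induction each is at most $mkd(q)^{3/2}(N/g)^{1/2}q^{3/16}(\log q\log\log q)^{1/2}$, and the factor $2$ for the two boundaries yields exactly $(2/\sqrt{g})$ times the target. Your trivial bound $|E(y)|\le 2kuv$ gives, after averaging, an error of order $kUV\asymp N/g$, which dominates the target $\asymp N^{1/2}q^{3/16}$ whenever $N\gg q^{3/8}$, i.e.\ throughout the non-trivial range. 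Without the induction the proof does not close.

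Two secondary issues stem from this. First, the correct choices are $U=\lfloor N/(gq^{1/4})\rfloor$ and $V=\lfloor q^{1/4}/k\rfloor$ (not $q^{1/4}/g$); in particular $U$ must depend on $N$, so that the main H\"older term scales like $N^{1/2}$ rather than $N^{3/4}$. The quantity $v_1^{2}q^{3/8}\log q/(g\log\log q)$ is only the \emph{upper bound} for $U$, obtained after first disposing of the range $N>v_1(m,q)^{2}q^{5/8}\log q/\log\log q$ via the ordinary P\'olya--Vinogradov inequality; this preliminary step is precisely what defines $v_1$ and is absent from your sketch. Second, the factor $\bigl(1-1/h-gq^{1/4}/(m^{2}q^{3/8}\log q\log\log q)\bigr)^{-1}$ does not arise from comparing $\Phi^{*}(U)$ with $U$; it comes from the floor-function losses in the lower bound $UV\ge (N/(gk))\bigl(1-k/q^{1/4}-gq^{1/4}/N\bigr)$, combined with $q>(hk)^4$ and the induction base $N\ge m^{2}q^{3/8}\log q\log\log q$.
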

\begin{proof}
We proceed by induction on $N$ using \eqref{II}, as for any $K\le m^2q^{3/8}\log q (\log \log q)$ we trivially have 
\begin{align*}
\left|\sum_{M<n\le M+K}\psi(n)\chi(n)\right|\le m kd(q)^{3/2}K^{1/2}q^{3/16}(\log q)^{\frac{1}{2}} (\log \log q)^{\frac{1}{2}}.
\end{align*}
This forms the basis of our induction and we assume \eqref{II} holds for any sum of length strictly less than $N$. Define
\begin{align}
\label{eq:UVdef}
U= \left \lfloor \frac{N}{gq^{1/4}} \right\rfloor, \quad V=\left \lfloor \frac{q^{1/4}}{k} \right\rfloor,
\end{align}
and note that 
\begin{align*}
UV\le \frac{N}{gk}.
\end{align*}
Also note that $\psi \chi$ is a non-principal character, with modulo $\le kq$, for otherwise $\overline{\psi}$ and $\chi$ would be induced by the same primitive character; that is impossible as $\chi$ is primitive modulo $q$ and we have that $q>k$. We thus have by the P\'{o}lya--Vinogradov inequality
\begin{align*}
\left|\sum_{M<n\le M+K}\psi(n)\chi(n)\right|\le 2\sqrt{kq}\log kq,
\end{align*}
and thus for $K>v_1(m,q)^2 q^{\frac{5}{8}}\frac{\log q}{\log \log q}$ Theorem~\ref{TB} holds. Note that using the P\'{o}lya--Vinogradov inequality from \cite{F-S} would allow to improve on $m$, but the above result is good enough for our purposes.
For any integer $y<N$ we have 
\begin{align*}
\sum_{M<n\le M+N}&\psi(n)\chi(n)=\sum_{M-y<n\le M+N-y}\psi(n+y)\chi(n+y) \\
&=\sum_{M<n\le M+N}\psi(n+y)\chi(n+y)+\sum_{M-y<n\le M}\psi(n+y)\chi(n+y) \\ & -\sum_{M+N-y<n\le M+N}\psi(n+y)\chi(n+y),
\end{align*}
and hence
\begin{align*}
\sum_{M<n\le M+N}\psi(n)\chi(n)=\sum_{M<n\le M+N}\psi(n+y)\chi(n+y)+ 2\theta E(y),
\end{align*}
with $E(y)=\max_M\left|\sum_{M<n\le M+y}\psi(n)\chi(n)\right|$ and for some $|\theta|\le 1$  depending on $y$. Let $\cU$ denote the set 
$$\cU=\{ 1\le u \le U \ : \ (u,q)=1 \},$$
and average the above over integers  $kuv$ with $u\in \cU$ and $1\le v \le V$ to get 
\begin{align}
\label{eq:Wbb1}
\left|\sum_{M<n\le M+N}\psi(n)\chi(n)\right|\le \frac{1}{V|\cU|}|W|+ 2\frac{1}{V|\cU|} \sum_{u,v}E(y),
\end{align}
where 
\begin{align*}
W=\sum_{M<n\le M+N}\sum_{u\in \cU}\sum_{1\le v \le V}\psi(n+kuv)\chi(n+kuv).
\end{align*}
For any $u$, $v$  we have $uvk\le N/g$, we thus by the induction hypothesis 
\begin{align}
\label{eq:E}
2\frac{1}{V|\cU|}\sum_{u,v}E(y)&\le \frac{2}{\sqrt{g}}mkd(q)^{3/2}\sqrt{N} q^{3/16}(\log q)^{\frac{1}{2}}(\log \log q)^{\frac{1}{2}}.
\end{align}
Since $\psi$ is a character mod $k$, we have 
\begin{align*}
|W|\le \sum_{M<n\le M+N}\sum_{u\in \cU}\left|\sum_{1\le v \le V}\chi(nu^{-1}+kv) \right| =\sum_{\lambda=1}^{q}I(\lambda)\left|\sum_{1\le v \le V}\chi(\lambda+kv) \right|,
\end{align*}
where $I(\lambda)$ counts the number of solutions to the congruence
\begin{align*}
nu^{-1}\equiv \lambda \mod{q}, \quad M<n\le M+N, \ \ u\in \cU.
\end{align*}
By H\"{o}lder's inequality 
\begin{align*}
|W|^{4}\le \left(\sum_{\lambda=1}^{q}I(\lambda) \right)^{2}\left(\sum_{\lambda=1}^{q}I(\lambda)^2 \right)\left(\sum_{\lambda=1}^{q}\left|\sum_{1\le v \le V}\chi(\lambda+kv)\right|^{4} \right).
\end{align*}
We have
\begin{align*}
\sum_{\lambda=1}^{q}I(\lambda)=N|\cU|\le NU,
\end{align*}
and, by $N<q$ and since $28\le U\le \frac{N}{12}$, by our assumptions on the range of $q$, by Lemma~\ref{lem:multcong}
\begin{align*}
I_q(N,U)\le  2UN\left(\frac{NU}{q}+\log(1.85 U) \right),
\end{align*}
using that $N\le v_1(m,q)^2 q^{\frac{5}{8}}\frac{\log q}{ \log \log q}$, we obtain
\begin{align*}
\sum_{\lambda=1}^{q}I(\lambda)^2 \le v_2(m,q)UN\left(\frac{\log {q}}{\log \log q}\right)^2.
\end{align*}
By Lemma~\ref{lem:burmv}
\begin{align*}
\sum_{\lambda=1}^{q}\left|\sum_{1\le v \le V}\chi(\lambda+kv)\right|^{4} & \le 16qk^2V^2+4q^{1/2}k^4V^4d(q)^6.
\end{align*}
Recalling~\eqref{eq:UVdef}, the above estimates simplify to
\begin{align*}
\sum_{\lambda=1}^{q}I(\lambda)\le \frac{N^2}{gq^{1/4}},
\end{align*}
\begin{align*}
\sum_{\lambda=1}^{q}I(\lambda)^2 \le v_2(m,q)\frac{N^2}{gq^{1/4}}\left(\frac{\log {q}}{\log \log q}\right)^2,
\end{align*}
\begin{align*}
\sum_{\lambda=1}^{q}\left|\sum_{1\le v \le V}\chi(\lambda+kv)\right|^{4}\le \frac{17}{4} q^{3/2}d(q)^6.
\end{align*}
Therefore
\begin{align*}
|W|^4\le \frac{17 v_2(m,q)}{4g^3} N^6q^{3/4}d(q)^6(\frac{\log {q}}{\log \log q})^2.
\end{align*}
Note that
\begin{align*}
\quad UV\ge \frac{N}{gk}\left(1-\frac{k}{q^{\frac{1}{4}}}-\frac{gq^{\frac{1}{4}}}{N}+\frac{gk}{N}\right)\ge \frac{N}{gk}\left(1-\frac{1}{h}-\frac{gq^{\frac{1}{4}}}{m^2q^{3/8}\log q (\log \log q)^2}\right).
\end{align*}
Thus using~\eqref{eq:Wbb1},~\eqref{eq:E}, Theorem 15 in \cite{Rosser} and that 
\begin{align*}
|\cU|\ge \frac{\phi(q)}{2q}U,
\end{align*}
we get 
\begin{align*}
\left|\sum_{M<n\le M+N}\psi(n)\chi(n)\right|\le v_3(m,q,g,h) kd(q)^{3/2}N^{1/2}q^{3/16}(\log q)^{\frac{1}{2}} (\log \log q)^{\frac{1}{2}},
\end{align*}
now, if $v_3(m,q,g,h)\le m$, we conclude the proof by induction.
\end{proof}
Theorem~\ref{TB} follows by computationally finding $g$ and $h$ such that for small $q$ we have a small $m$ such that $v_3(m,q,g,h)\le m$. 
\section{Explicit improved P\'{o}lya--Vinogradov }
\label{PVS}
The aim of this section is to prove Theorem \ref{P-V} following \cite{Hildebrand}.
\subsection{Two important lemmas}
We use Corollary \ref{c1} and Theorem \ref{TB} to obtain the explicit version of \cite[Lemma 2]{Hildebrand} with a certain range for the modulus $q$.
\begin{proof}~[Lemma \ref{LI1}]
Let $\epsilon$, $\chi$, $q$, $x$ and $\alpha$ be fixed and set $N= [x]$, $R=(\log q)^{\gamma}$. By $q \ge 10^{5}$ and $\gamma \ge 2$, we easily obtain $\frac{e^3}{E} \le R \le N$. By Dirichlet's theorem there exist integers $r$ and $s$, where $(r, s)=1$ and $ 1\le s \le N/R$, such that
\begin{equation}
\label{alpha}
\left| \alpha - \frac{r}{s} \right| \le \frac{1}{sN/R}.
\end{equation}
If $ s\ge R$, the result follows from Corollary \ref{c1}, since
\begin{align*}
c_1(1, E) \frac{N}{\log N}+& c_2(1,E, R) N \frac{(\log R)^{\frac{3}{2}}}{\sqrt{R}} \le \nonumber \\ &\le \left((\frac{3}{8}+\epsilon)^{-1} c_1(1,E)+\frac{c_2(1,E,R) (\gamma \log \log q)^{\frac{3}{2}}}{( \log q)^{\frac{\gamma}{2}-1}} \right) \frac{x}{\log q},
\end{align*}
by the definition of $N$ and $R$. 
Now suppose $s < R$. We obtain by partial summation 
\begin{align*}
\left| \sum_{n \le x} \chi(n) e(N'/q n) \right| \le  \left( 1+2 \pi \left| \alpha - \frac{r}{s}\right| x\right)\max_{u \le x} |T(u)| \nonumber \\ \le \left(1 + 4 \pi (\log q)^{\gamma}\right) \max_{u \le x} |T(u)|,
\end{align*}
where 
\begin{equation*}
T(u)=\sum_{n \le x} \chi(n)e\left( \frac{rn}{s} \right).
\end{equation*}
By grouping the terms of the sum $T(u)$ according to the value of $(n,s)$, we get
\begin{align*}
T(u)&=\sum_{dt=s} \sum_{\substack{dm \le u \\ (m,t)=1}} \chi(md) e\left( \frac{rm}{t}\right)\\&= \sum_{dt=s} \chi(d) \sum_{\substack{1\le a \le t\\ (a,t)=1}} \chi(md) e\left( \frac{ra}{t}\right)\sum_{\substack{m \le u/d\\ m=a (\text{mod}\quad t)}} \chi(m)\\&
=\sum_{dt=s} \frac{\chi(d)}{\phi (t)} \sum_{\psi \text{mod}\quad  t}\sum_{1\le a \le t}  e\left( \frac{ra}{t}\right)\overline{\psi}(a)\sum_{m \le u/d} \chi(m) \psi(m).
\end{align*}
Applying \eqref{Tr} to the right hand sum we obtain
\begin{align*}
\left| \sum_{n \le x} \chi(n)  e(N'/q n) \right| \le 
\frac{3 m(\log q)^{2\gamma+1} \left(1 + 4 \pi (\log q)^{\gamma}\right)(\log q \log \log q)^{\frac{1}{2}}}{q^{\epsilon/2-\frac{3 \log 2 }{2\log \log q}\left(1+\frac{1}{\log \log q}+\frac{4.7626}{(\log \log q)^2} \right)}}\frac{x}{\log q}.
\end{align*}
Thus Lemma \ref{LI1} follows.
\end{proof}

We then need explicit bounds on two trigonometric sums, by Pomerance from Lemma 2 and Lemma 3 in \cite{Pomerance}. 
\begin{lemma}
\label{LI2}
Uniformly for $x \ge 1$ and real $\alpha$ we have
\begin{equation*}
\sum_{n \le x}\frac{1-\cos (\alpha n)}{n} \le \log x +C +\log 2 +\frac{3}{x}
\end{equation*}
and
\begin{equation*}
\sum_{n \le x}\frac{|\sin (\alpha n)|}{n} \le \frac{2}{\pi} \log x +\frac{2}{\pi}\left( C +\log 2 +\frac{3}{x}\right).
\end{equation*}
\end{lemma}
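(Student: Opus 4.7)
My plan is to prove the second inequality by reducing it to the first via the Fourier expansion of $|\sin\theta|$, and to prove the first by separating the harmonic part from a cosine sum which is controlled by the classical Dirichlet-kernel estimate.

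For the first inequality, I would write
\[
\sum_{n\le x}\frac{1-\cos(\alpha n)}{n}=\sum_{n\le x}\frac{1}{n}-\sum_{n\le x}\frac{\cos(\alpha n)}{n},
\]
apply the explicit harmonic estimate $\sum_{n\le x}1/n\le \log x+C+\theta/x$ for a suitable $\theta$, and then bound $-\sum_{n\le x}\cos(\alpha n)/n$ above by $\log 2+(\text{tail error})$. The key fact is that the full series equals $-\log|2\sin(\alpha/2)|$, which is at least $-\log 2$ since $|\sin(\alpha/2)|\le 1$; so it suffices to control the truncation tail. This I would do by Abel summation combined with the bound $|\sum_{n\le M}\cos(\alpha n)|\le 1/(2|\sin(\alpha/2)|)$ coming from the closed form of the Dirichlet kernel, which yields an error of order $1/(x|\sin(\alpha/2)|)$.

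For the second inequality, I would exploit the Fourier series
\[
|\sin\theta|=\frac{2}{\pi}-\frac{4}{\pi}\sum_{k=1}^{\infty}\frac{\cos(2k\theta)}{4k^2-1},
\]
interchange summation, and apply the first inequality with $2k\alpha$ in place of $\alpha$ in each inner cosine sum, rewritten as $\sum_{n\le x}\cos(2k\alpha n)/n=\sum_{n\le x}1/n-\sum_{n\le x}(1-\cos(2k\alpha n))/n$. Using the telescoping identity $1/(4k^2-1)=\tfrac12(1/(2k-1)-1/(2k+1))$ to evaluate $\sum_{k\ge1}1/(4k^2-1)=1/2$, the $\sum 1/n$ contributions combine to produce exactly the factor $2/\pi$ in front of the right-hand side of the first inequality, and the corrections inherit the error $\log x+C+\log 2+3/x$ cleanly.

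The main obstacle will be handling uniformly the regime where $|\sin(\alpha/2)|$ is small (i.e.\ $\alpha$ close to a multiple of $2\pi$), because the Dirichlet-kernel tail bound degrades there. In that regime I would instead use the Taylor estimate $1-\cos(\alpha n)\le (\alpha n)^2/2$, giving $\sum_{n\le x}(1-\cos(\alpha n))/n\le \alpha^2 x^2/4$; I would then check that this quantity is already dominated by $\log x+C+\log 2$ precisely in the range where the Dirichlet approach breaks down, so the two regimes together cover all $\alpha$. A secondary technical point is calibrating the constants at each step so that the final error term is exactly $3/x$ rather than some looser $c'/x$; this requires using a sharp form of the harmonic partial-sum bound and tracking the Abel-summation remainder carefully.
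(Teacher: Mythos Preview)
The paper does not prove this lemma; it simply quotes it from Pomerance~\cite{Pomerance} (his Lemmas~2 and~3), so there is no in-paper argument to compare against.

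Your plan is sound. The reduction of the second inequality to the first via the Fourier expansion of $|\sin\theta|$ works exactly: since $\sum_{k\ge1}(4k^2-1)^{-1}=\tfrac12$, the harmonic sums cancel and one obtains precisely $(2/\pi)(\log x+C+\log 2+3/x)$ with no loss. For the first inequality, Abel summation gives $\bigl|\sum_{n>N}\cos(\alpha n)/n\bigr|\le 1/\bigl((N+1)|\sin(\alpha/2)|\bigr)$, so writing $s=|\sin(\alpha/2)|$ your Dirichlet step bounds $-\sum_{n\le x}\cos(\alpha n)/n$ by $\log(2s)+1/(xs)$; on $[1/x,1]$ this function is increasing with value $\log 2+1/x$ at $s=1$, and together with $\sum_{n\le x}1/n\le\log x+C+1/x$ this already gives error $2/x$. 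The same bound persists down to $s$ of order $1/(x\log x)$, below which (after reducing $\alpha$ modulo $2\pi$) your Taylor estimate $\alpha^2x^2/4$ is comfortably below $\log x$, so the two regimes do cover all $\alpha$ once $x$ is moderately large, with small $x$ checked directly. The constant $3/x$ thus emerges with a little room to spare rather than requiring the delicate calibration you anticipated.
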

Note that the last terms in the above upper bounds can be improved, but this would have no effect on our final result.
\subsection{Proof of Theorem \ref{P-V}}
We take $\chi$ primitive. We start  with
\begin{equation*}
\chi(n)=\frac{1}{d (\overline{\chi})}\sum_{a=1}^q \overline{\chi}(a) e\left( \frac{an}{q} \right) = \frac{1}{d (\overline{\chi})} \sum_{0< |a|< q/2} \overline{\chi}(a) e\left( \frac{an}{q} \right),
\end{equation*}
where $d (\overline{\chi})$ is the Gaussian sum. Summing over $ 1 \le n \le N$, we obtain
\begin{equation*}
\sum_{n=1}^N \chi(n)= \frac{1}{d (\overline{\chi})} \sum_{0< |a|< q/2} \overline{\chi}(a) \sum_{n=1}^N e\left( \frac{an}{q} \right)= \frac{1}{d (\overline{\chi})} \sum_{0< |a|< q/2} \overline{\chi}(a) \frac{ e\left( \frac{aN}{q} \right)-1}{1- e\left( \frac{-a}{q} \right)}.
\end{equation*}
Since $d (\overline{\chi})=\sqrt{q}$ for primitive characters and 
\begin{equation*}
\frac{ 1}{1- e\left( \frac{-a}{q} \right)} = \frac{q}{2 \pi i a} -\frac{\sum_2^{\infty} \frac{(-\frac{2 \pi i a}{q})^{j-2}}{j!} }{-\frac{q}{2\pi i a}\left( e\left(\frac{-a}{q}\right)-1\right) },
\end{equation*}
for $0< |a|< q/2$. 
Using $0< |a|< q/2$, it is easy to see that
\begin{equation*}
\left| \sum_2^{\infty} \frac{\left(-\frac{2 \pi i a}{q}\right)^{j-2}}{j!}\right|\le \frac{e^{\pi}-1-\pi}{\pi^2},
\end{equation*}
then, with $x=\frac{2\pi a}{q}$, we observe that
\begin{equation*}
\left|-\frac{q}{2\pi i a}\left( e\left(\frac{-a}{q}\right)-1\right)\right|=\frac{1}{|x|}\sqrt{(\cos x-1)^2+(\sin x)^2},
\end{equation*}
considering that the derivative of the right hand side is negative for $|x|\le \pi$, then
\begin{equation*}
\left|-\frac{q}{2\pi i a}\left( e\left(\frac{-a}{q}\right)-1\right)\right|> \frac{2}{\pi}.
\end{equation*}
It follows that
\begin{equation*}
\sum_{n=1}^N \chi(n)\le \frac{\sqrt{q}}{2 \pi} \left| \sum_{0< |a|< q/2}\frac{\overline{\chi(a)}\left( e(\frac{aN}{q})-1\right)}{a}\right| + \frac{\left(e^{\pi}-1-\pi\right)}{2\pi} \sqrt{q}.
\end{equation*}
Now we split the inner sum in two parts: $\Sigma_1$ with $0<|a|\le q_1=q^{\frac{3}{8}+\epsilon}$ and $\Sigma_2$ with $q_1<|a|<q/2$.\\
By partial summation, Lemma \ref{LI1} and Theorem \ref{TB} we have
\begin{align*}
\left|\Sigma_2\right | \le  2c(\chi)\left(\frac{1}{\log q}+\frac{5}{8}-\epsilon\right)c(E, q, \gamma, \epsilon,m)+1.
\end{align*}
\begin{equation*}
 \Sigma_1= 
\begin{dcases}
     2i\sum\limits_{1\le a \le q_1}\frac{\overline{\chi(a)}\sin(\frac{2\pi aN}{q})}{a} ~~\text{if}~~\chi(-1)=1,\\\\
    -2\sum\limits_{1\le a \le q_1}\frac{\overline{\chi(a)}\left( 1-\cos(\frac{2\pi aN}{q})\right)}{a}~~\text{if}~~\chi(-1)=-1,
\end{dcases}
\end{equation*}
and from Lemma \ref{LI2}
\begin{equation*}
  \left|\Sigma_1 \right| \le
  \begin{dcases}
     2 \left( \frac{2}{\pi} \log q_1 +\frac{2}{\pi}\left( C +\log 2 +\frac{3}{q_1 }\right)\right)~~\text{if}~~\chi(-1)=1,\\
    ~\\
    2\left(\log q_1 +C +\log 2 +\frac{3}{q_1}\right)~~\text{if}~~\chi(-1)=-1.
  \end{dcases}
\end{equation*}
And thus we obtain the desired result.
\section{Optimization problem}
\label{OP}
The aim of this section is to obtain a completely explicit and concise version of Theorem \ref{P-V}, thus to prove Theorem \ref{EP-V} and Tables \ref{tab:1} and \ref{tab:2}.\\
To this aim we need to optimize Theorem \ref{P-V} in the variables $\epsilon$, $q$, $E$ and $\lambda$, and in doing so we aim to minimize $\epsilon$ and $q$, and at the same time $n(q,\epsilon)$ and $m(E,q,\gamma,\epsilon)$. We will now start introducing some bounds on these variables and make some useful comments:
\begin{itemize}
\setlength\itemsep{0.01em}
\item We will use $m=0.53$, $h=300$ and $q\ge 10^{400}$ from Table \ref{tab:m}
\item Choosing $\gamma$ and a lower bound on $q$ we must ensure that $p> (300(\log q)^{\gamma})^4$
\item To minimize the second term of $c(E,q,\gamma,\epsilon)$ we need to choose $\gamma$ such that $(\log q)^2(\log \log q)^3\le (\log q)^{\gamma}$
\item Confronting Theorem \ref{P-V} with equation \eqref{eq:oldPV} we will assume $\epsilon<\frac{1}{8}$
\item The above point and the definition of $c(E,q,\gamma,\epsilon)$ implies that \\$\frac{1}{16}>\frac{3 \log 2 }{2\log \log q}\left(1+\frac{1}{\log \log q}+\frac{4.7626}{(\log \log q)^2} \right)$, which implies $q\ge e^{e^{17.82}}$
\item It is interesting to note that for any $\gamma>2$ we have, for $q \rightarrow \infty$, that
\begin{equation*}
c(E,q,\gamma,\epsilon)\longrightarrow \frac{8}{3 }c_1(1,E,(\log q)^{\gamma})
\end{equation*}
\item The above function quickly stabilises on the limit
\item Increasing $\gamma$ reduces the left hand term of $c(E,q,\gamma,\epsilon)$ and increases the right hand term
\item Choosing a small $E$ appears to be optimal
\end{itemize}
From Theorem \ref{P-V} and the above observations Tables \ref{tab:1} and \ref{tab:2} follow by computation. The optimization problem results, in this case, in a simple solution as we are forced to take $q$ big to have $h_{1/2}(E, q, \gamma, \epsilon)$ small enough, over this range of $q$ the optimal $\gamma$ is constant. We obtain that $\gamma=E=4$ and $m=0.53$ are optimal. \\
We will prove a version of Table \ref{tab:1} and \ref{tab:2} for all $q$ such that $d(q)=U$, with $U$ a fixed constant. It is easy to see, by Theorem \ref{TB} and the proof of Lemma \ref{LI1}, that in this case Theorem \ref{P-V} holds but with $d(q)=U$ instead of the general upper bound due to Robin.
In the above formula we will chose the optimal $m$ from Table \ref{tab:m}, depending on the range of $q$.
Computations now give the following table for $U=2$ and thus $q$ prime. \\We will focus on small $\epsilon$ with the aim of minimizing $q$, while keeping the constant limited. The optimization problem is harder in this case as $q$ can be taken relatively small, thus, after choosing a lower bound for $q$, we have to optimize $\gamma$ for each medium sized $q$. This means that for each medium sized $q$ we need to find the $\gamma$ that minimizes the result and then take the maximum between all of them.
To ease this problem we can balance $q$ and $h_{1/2}$ to ensure that the following result improves on \cite{F-S} in the chosen range of $q$, this will give us a $q$ big enough to make the optimization problem simpler. 
\begin{center}
\captionof{table}{$q$ prime} \label{tab:4} 
\def\arraystretch{1.25}%
\begin{tabular}{|c| c | c | c | c |c| }
  \hline
  $\epsilon$ &$\log \log q_0$  & $h_1(E, q, \gamma, \epsilon, m)$ &$\log \log q_0$ & $h_2(E, q, \gamma, \epsilon)$\\[0.5ex] 
  \hline \hline
   $\frac{1}{8}(1-\frac{1}{10})$&  13.9  & 2594 &14.1& 5183 \\
  \hline
   $\frac{1}{8}(1-\frac{1}{100})$&  16.1 & 2480 & 16.4 &4955\\
  \hline
   $\frac{1}{8}(1-\frac{1}{1000})$& 18.4 & 2469 &18.7 & 4933\\
  \hline 
   $\frac{1}{8}(1-\frac{1}{10000})$& 20.8 & 2468 &21  &4931\\
  \hline
\end{tabular}
\end{center}
It is interesting to note that in the above case, even if $h_{1/2}$ are the same as in Table \ref{tab:2}, we have lower bounds on $q$ that are significantly smaller compared to the case in which $q$ is a highly composite number, it is the size of $h_{1/2}$ that forces $q$ to be big to do better than \cite{F-S}. Thus an improvement on Corollary \ref{c1} would lead to an important improvement on the size of $q$.

\section*{Acknowledgements}
I would like to thank my supervisor Tim Trudgian and Bryce Kerr for the fundamental help in developing this paper. I would also like to thank Aleksander Simoni\v{c} for the talks we had on this paper.

\end{document}